\documentclass[a4paper,11pt]{article}
\usepackage[utf8]{inputenc}
\usepackage{amsmath}
\usepackage{amsfonts}
\usepackage{amssymb}
\usepackage{amsthm}
\usepackage[shortlabels]{enumitem}
\usepackage{tikz}
\usetikzlibrary{calc, decorations.pathreplacing, calligraphy, shapes.geometric}
\usepackage{xfp}
\usepackage{hyperref}
\usepackage{bm}
\usepackage[all]{hypcap} 
\usepackage{doi}

\newtheorem{theorem}{Theorem}[section]

\newtheorem{corollary}[theorem]{Corollary}
\newtheorem{proposition}[theorem]{Proposition}
\newtheorem{observation}[theorem]{Observation}
\newtheorem{lemma}[theorem]{Lemma}

\theoremstyle{definition}
\newtheorem{definition}[theorem]{Definition}

\newcommand{\w}{2.5pt}

\date{}

\begin{document}
\title{
Cubic graphs with edges in exactly one perfect matching}
\author{J. Goedgebeur$^{1,2}$\thanks{Supported by Internal Funds of KU Leuven and a grant of the Research Foundation Flanders (FWO) with grant number G0AGX24N.}, D. Mattiolo$^1$\thanks{Supported by a Postdoctoral Fellowship of the Research Foundation Flanders (FWO) with grant number 1268323N.}, G. Mazzuoccolo$^3$, \\ J. Renders$^{1*}$, I. H. Wolf$^4$\thanks{Funded by Deutsche Forschungsgemeinschaft (DFG) - 445863039.} \\
	\footnotesize $^1$ Department of Computer Science, KU Leuven Kulak, 8500 Kortrijk, Belgium.\\
 	\footnotesize $^2$ Department of Applied Mathematics, Computer Science and Statistics, \\ \footnotesize Ghent University, 9000 Ghent, Belgium.\\
  \footnotesize
 $^3$ Dipartimento di Scienze Fisiche, Informatiche e Matematiche,\\
 \footnotesize Universit\`a di Modena e Reggio Emilia, 41125 Modena, Italy.
\\
    \footnotesize $^4$ Department of Mathematics, Paderborn University, 33098 Paderborn,
		Germany.}
    \maketitle

\begin{abstract} 
       Petersen's seminal work in 1891 asserts that the edge-set of a cubic graph can be covered by distinct perfect matchings if and only if it is bridgeless. Actually, it is known that for a very large fraction of bridgeless cubic graphs, every edge belongs to at least two distinct perfect matchings. In this paper, we study the class of non-double covered cubic graphs, i.e.\ graphs having an edge, called lonely edge, which belongs to exactly one perfect matching. First of all, we provide a reduction of the problem to the subclass $\cal U$ of $3$-connected cubic graphs.  Then, we furnish an inductive characterization of $\cal U$ and we study properties related to the count of lonely edges. In particular, denoting by $\mathcal{U}_k$ the subclass of graphs of $\cal U$ with exactly $k$ lonely edges, we prove that $\mathcal{U}_k$ is empty for $k>6$, and we present a complete characterization for $3 \leq k \leq 6$. The paper concludes with some insights on ${\cal U}_1$ and ${\cal U}_2$.

    \medskip
    
    \noindent
    {\bf Keywords:} cubic graphs, edge-colorings, perfect matchings, Klee-graphs.

    \medskip

    \noindent
    {\bf MSC:} 05C75

\end{abstract}

\section{Introduction}

In graph theory, perfect matchings stand as one of the most captivating concepts. They represent a set of edges where every vertex is incident to exactly one of these edges. The study of perfect matchings in regular graphs has been a subject of extensive investigation (see for example~\cite{KKN06},\cite{KMZ23},\cite{MMSW23},\cite{MM20},\cite{MS21},\cite{Ma11},\cite{Ma13}), in particular when applied to cubic graphs.  
Already in 1891, Petersen~\cite{Pe91} proved that every bridgeless cubic graph $G$ admits a perfect matching. An extension of Petersen's theorem due to Schönberger~\cite{Sc34} implies that every edge of a bridgeless cubic graph belongs to at least one perfect matching of $G$. In other words, the edge-set of a cubic graph can be covered by using a set of distinct perfect matchings if and only if $G$ is bridgeless. 
In~\cite{EKKKN}, the number of perfect matchings in a bridgeless cubic graph is proved to grow exponentially with respect to the order of the graph. Actually, for a large fraction of bridgeless cubic graphs every edge belongs to more than one perfect matching.  

We say that a graph $G$ is \emph{matching double covered} if every edge $e\in E(G)$ belongs to at least two distinct perfect matchings.

In this paper, we investigate the class of cubic graphs which are {\it not} matching double covered. In other words, a bridgeless cubic graph is not matching double covered if it contains at least one edge $e$ belonging to exactly one perfect matching. From now on, we call such an edge $e$ a \emph{lonely edge}. The number of lonely edges of a graph $G$ is denoted by $l(G)$. 

In Section~\ref{sec:2ec}, we show how we can reduce the general description of non-double covered cubic graphs to the subclass, denoted by $\cal U$, of $3$-connected ones. 
Since it was already proved in~\cite{EKSS10} (see Lemma 19) that a 3-connected cubic graph  which is not a Klee graph (see Definition~\ref{def:Klee}) is matching double covered, this permits to focus our attention on this special class of cubic graphs. In particular, note that cyclically $4$-edge-connected cubic graphs are matching double covered.

In Section~\ref{sec:3ec}, we present an inductive description of $\cal U$ and discuss some properties related to $l(G)$. In particular, denoting by $\mathcal{U}_k$  the set of graphs $G \in \mathcal{U}$ with $l(G)=k$, we show that $\mathcal{U}_k$ is empty for all $k>6$ and we offer a complete characterization for all $3 \leq k \leq 6$, see Corollary \ref{cor:5 and 6 are finite} and Theorem \ref{thm:Characterization_U4} and \ref{thm:Characterization_U3}. 
Finally, we conclude the paper with some insights on ${\cal U}_1$ and ${\cal U}_2$, see Theorem \ref{thm:U1_U2}.

\section{Reduction to \texorpdfstring{$\bm3$}{3}-connected cubic graphs}\label{sec:2ec}

In this section we show that the study of non-double covered cubic graphs can be reduced to $3$-edge-connected non-double covered cubic graphs.

Let $G_1$ and $G_2$ be two bridgeless cubic graphs (note that in this section graphs might have parallel edges) and let $x_iy_i\in E(G_i),$ for $i\in\{1,2\}.$ A \emph{$2$-cut-connection} between $G_1$ and $G_2$ is the following operation: for $i\in\{1,2\}$, remove $x_iy_i$ from $G_i$ and add the edges $x_1x_2$ and $y_1y_2$. The resulting graph, denoted by $(G_1,x_1y_1) \oplus (G_2,x_2y_2)$ is bridgeless, cubic and $\{x_1x_2,y_1y_2\}$ is a $2$-edge-cut. 
The inverse operation is called a \emph{$2$-cut-reduction}. More precisely, the graph $G-F+x_1y_1+x_2y_2$ is obtained via a $2$-cut-reduction along $F$, where $G$ is a cubic graph with a $2$-edge-cut $F=\{x_1x_2,y_1y_2\}\subseteq E(G)$. Its connected components are denoted by $G_1$ and $G_2$.
In all cases considered we use the same labels for vertices and edges in $G$ and the corresponding vertices and edges in $G_i$.

\begin{observation}\label{obs:pm_2-cut-connection}
Let $G$ be a bridgeless cubic graph with a $2$-edge-cut $\{x_1x_2,y_1y_2\}$, and let $G_1$ and $G_2$ be two bridgeless cubic graphs such that $G = (G_1,x_1y_1) \oplus (G_2,x_2y_2)$. Then, $M\subseteq E(G)$ is a perfect matching of $G$ if and only if there are two perfect matchings $M_1$ and $M_2$ of $G_1$ and $G_2$ respectively, such that
    \begin{itemize}
        \item either $M=M_1\cup M_2$;
        \item or $x_iy_i\in M_i$ for $i\in\{1,2\}$ and $M=(M_1\cup M_2 \setminus \{x_1y_1,x_2y_2\})\cup \{x_1x_2,y_1y_2\}$. 
    \end{itemize}
\end{observation}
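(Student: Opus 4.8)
The plan is to prove the two implications separately, with the forward direction resting on a single parity observation about how a perfect matching meets the $2$-edge-cut, and the reverse direction being a routine verification. Throughout I would set $E_1 = E(G_1) \setminus \{x_1y_1\}$ and $E_2 = E(G_2) \setminus \{x_2y_2\}$, so that $E(G)$ is the disjoint union $E_1 \cup E_2 \cup \{x_1x_2, y_1y_2\}$, and I would record that $V(G_1)$ and $V(G_2)$ partition $V(G)$, each of even cardinality since $G_1$ and $G_2$ are cubic.

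For the forward direction, assume $M$ is a perfect matching of $G$ and set $t = |M \cap \{x_1x_2, y_1y_2\}|$. The first step is to show $t \in \{0,2\}$: every vertex of $V(G_1)$ is covered by $M$, and each edge of $M$ meeting $V(G_1)$ is either internal (both ends in $V(G_1)$, hence in $E_1$) or one of the $t$ cut edges. Counting incidences as $2a + t = |V(G_1)|$, where $a$ is the number of internal edges used, shows $t \equiv |V(G_1)| \equiv 0 \pmod 2$, and since $t \le 2$ we get $t \in \{0,2\}$. I would then split into the two cases. If $t = 0$, then $M \subseteq E_1 \cup E_2$ and, putting $M_i = M \cap E_i$, the set $M_1$ covers all of $V(G_1)$ using edges of $E_1 \subseteq E(G_1)$, so it is a perfect matching of $G_1$ avoiding $x_1y_1$; symmetrically for $M_2$, giving $M = M_1 \cup M_2$ as in the first bullet. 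If $t = 2$, both cut edges lie in $M$, so $x_1,y_1$ (resp.\ $x_2,y_2$) are matched across the cut; putting $M_i = (M \cap E_i) \cup \{x_iy_i\}$ makes $M_i$ a perfect matching of $G_i$ containing $x_iy_i$, and $M = (M_1 \cup M_2 \setminus \{x_1y_1, x_2y_2\}) \cup \{x_1x_2, y_1y_2\}$ as in the second bullet.

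For the reverse direction I would verify that each prescribed $M$ is a perfect matching of $G$. In the first case, the ambient hypothesis $M = M_1 \cup M_2 \subseteq E(G)$ forces $x_iy_i \notin M_i$ (otherwise $M$ would contain the non-edge $x_iy_i$), so each $M_i$ covers $V(G_i)$ with edges of $E_i$ and the disjoint union covers $V(G)$ exactly once. In the second case, removing $x_iy_i$ from $M_i$ leaves exactly $x_i,y_i$ uncovered within $G_i$, and the two cut edges $x_1x_2, y_1y_2$ cover precisely $x_1,x_2,y_1,y_2$; a short incidence count confirms that every vertex of $G$ is matched once and that the selected edges are pairwise disjoint. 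I expect no real obstacle here: the only point needing care is the parity step, which I would phrase through the even cardinality of $V(G_1)$ rather than an external edge-cut lemma, and the small bookkeeping remark that in the first bullet the assumption $M \subseteq E(G)$ automatically excludes $x_iy_i \in M_i$.
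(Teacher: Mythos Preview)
Your argument is correct. The paper states this result as an \emph{Observation} and gives no proof at all, treating the correspondence as evident; your write-up simply makes explicit the parity count $2a + t = |V(G_1)|$ forcing $t\in\{0,2\}$ and the routine case split, which is exactly the standard justification one would supply if pressed.
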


\begin{lemma}\label{lem:lonely_edges_in_2_cut_connection}
    Let $G$ be a bridgeless cubic graph with a $2$-edge-cut $\{x_1x_2,y_1y_2\}$, and let $G_1$ and $G_2$ be two bridgeless cubic graphs such that $G = (G_1,x_1y_1) \oplus (G_2,x_2y_2)$. The following statements hold.

    \begin{enumerate}[i)]
        \item Let $x_1y_1$ be lonely in $G_1$,
        and let $e\ne x_2y_2$ be a lonely edge of $G_2$. Then, $e$ is lonely in $G$ if and only if $x_2y_2$ belongs to the only perfect matching containing $e$ in $G_2$.
        \item Let $x_1y_1$ and $x_2y_2$ be lonely edges of $G_1$ and $G_2$, respectively. Then, $x_1x_2$ and $y_1y_2$ are both lonely in $G$;
        \item Let $x_1y_1$ be not lonely in $G_1$. Then, there is no edge of $G_2$ that is lonely in $G$.\\
        Moreover, $x_1x_2$ and $y_1y_2$ are also not lonely in $G$;
        \item For both $i\in\{1,2\}$, if an edge different from $x_iy_i$ is not lonely in $G_i$, then it is not lonely in $G$.
    \end{enumerate}
\end{lemma}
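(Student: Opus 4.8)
The plan is to reduce the statement to a short arithmetic inequality obtained by counting, via Observation~\ref{obs:pm_2-cut-connection}, the perfect matchings of $G$ that contain the edge in question. By symmetry I may assume $i=1$; let $e\neq x_1y_1$ be an edge of $G_1$ that is not lonely in $G_1$, and note that $e\in E(G)$ as well, while $e$ is neither a cut edge nor an edge of $G_2$. First I would record the bijective consequence of Observation~\ref{obs:pm_2-cut-connection}: writing $\mathcal A_j$ (resp.\ $\mathcal B_j$) for the set of perfect matchings of $G_j$ avoiding (resp.\ containing) $x_jy_j$, the perfect matchings of $G$ split into the ``Type~A'' matchings $M_1\cup M_2$ with $(M_1,M_2)\in\mathcal A_1\times\mathcal A_2$ and the ``Type~B'' matchings $(M_1\setminus\{x_1y_1\})\cup(M_2\setminus\{x_2y_2\})\cup\{x_1x_2,y_1y_2\}$ with $(M_1,M_2)\in\mathcal B_1\times\mathcal B_2$; since the pair $(M_1,M_2)$ is recovered from $M$ by intersecting with $E(G_1)$ and $E(G_2)$ (adding back $x_jy_j$ in the Type~B case), these correspondences are bijections onto the two families.

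Because the membership of $e$ in a matching of $G$ depends only on its $G_1$-part $M_1$, I would then set
\[
a=\#\{M_1\in\mathcal A_1: e\in M_1\},\quad b=\#\{M_1\in\mathcal B_1: e\in M_1\},\quad \alpha=|\mathcal A_2|,\quad \beta=|\mathcal B_2|,
\]
so that the number of perfect matchings of $G$ containing $e$ equals $a\alpha+b\beta$. The three facts available are: $a+b\neq 1$, since $a+b$ is the number of perfect matchings of $G_1$ containing $e$ and $e$ is not lonely in $G_1$; $\beta\geq 1$, by Sch\"onberger's theorem applied to the edge $x_2y_2$ of the bridgeless cubic graph $G_2$; and $\alpha+\beta\geq 2$, because every bridgeless cubic graph has at least two perfect matchings.

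It then remains to show $a\alpha+b\beta\neq 1$. Supposing $a\alpha+b\beta=1$ with all four quantities non-negative integers, exactly one product equals $1$ and the other $0$. If $a\alpha=1$ then $a=\alpha=1$ and $b\beta=0$, which with $\beta\geq1$ forces $b=0$ and hence $a+b=1$, a contradiction; if $b\beta=1$ then $b=\beta=1$, so $\alpha\geq1$ by $\alpha+\beta\geq2$, whence $a\alpha=0$ gives $a=0$ and again $a+b=1$, a contradiction. Thus $e$ lies in $0$ or at least $2$ perfect matchings of $G$, i.e.\ it is not lonely. I expect the only real subtlety to be the hypothesis $\alpha+\beta\geq2$: without ruling out the possibility that $G_2$ has a unique perfect matching the claim would genuinely fail, so the crux is to justify that a bridgeless cubic graph has at least two perfect matchings — which follows since some edge lies outside a fixed perfect matching and, by Sch\"onberger's theorem, inside another (or directly from the exponential lower bound of~\cite{EKKKN}).
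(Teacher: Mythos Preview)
Your argument correctly establishes part~(iii), but it does not address parts~(i) and~(ii) at all. The lemma has three clauses, and immediately after announcing your plan you assume $i=1$ and take an edge $e\neq x_1y_1$ of $G_1$ that is not lonely in $G_1$ --- this is precisely the hypothesis of~(iii). Parts~(i) and~(ii) concern different hypotheses (the loneliness or non-loneliness of $x_1y_1$ in $G_1$, the behaviour of edges on the $G_2$-side, and of the cut edges $x_1x_2,y_1y_2$) and different conclusions, including an ``if and only if'' in~(i); none of this follows from the single inequality $a\alpha+b\beta\neq1$ you derive for edges of $G_1$.

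For~(iii) itself your counting proof is sound, and it differs in flavour from the paper's: the paper proves~(i) and~(ii) by explicitly exhibiting pairs of perfect matchings via Observation~\ref{obs:pm_2-cut-connection} and then dismisses~(iii) as following ``similarly'', whereas you translate the bijection of Observation~\ref{obs:pm_2-cut-connection} into the product formula $a\alpha+b\beta$ and rule out the value~$1$ arithmetically using $a+b\neq1$, $\beta\geq1$, $\alpha+\beta\geq2$. This is a clean alternative. In fact the same counting framework would have dispatched~(i) and~(ii) too: for~(ii) one has $|\mathcal B_1|\geq2$ and $|\mathcal A_1|\geq2$, so every edge on the $G_2$-side (and each cut edge, counted by $|\mathcal B_1|\cdot|\mathcal B_2|$) lies in at least two perfect matchings of $G$; for~(i) one has $|\mathcal B_1|=1$ and the count for $e\in E(G_2)\setminus\{x_2y_2\}$ becomes $|\mathcal A_1|\,a'+b'$ with $a'+b'=1$, which equals~$1$ exactly when $b'=1$. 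As submitted, however, the proof is incomplete.
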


\begin{proof}

    Proof of point $i)$. Let $e\in E(G_2)\setminus \{x_2y_2\}$ be lonely in $G$. 
    Since $G_1$ is bridgeless, it has at least two perfect matchings, say $M_1$ and $M_2$, not containing $x_1y_1$, if the only perfect matching $N_1$ in $G_2$ which contains $e$ does not contain $x_2y_2$, then $M_1 \cup N_1$ and $M_2 \cup N_1$ are two distinct perfect matchings of $G$ containing $e$, a contradiction. 

    On the other hand, assume that $x_2y_2$ belongs to the only perfect matching  of $G_2$ containing $e.$ Since $x_1y_1$ is lonely, by Observation~\ref{obs:pm_2-cut-connection} it follows that $e$ is lonely in $G$.


    Point $ii)$ follows easily from Observation~\ref{obs:pm_2-cut-connection}. 

    Proof of point $iii)$. Since $x_1y_1$ is not lonely in $G_1$, there exist two different perfect matchings of $G_1$ containing it, say  $M_1$ and $M_2$. Moreover, since $G_1$ is bridgeless there exists at least two perfect matchings of $G_1$ not containing $x_1y_1,$ say $M_3$ and $M_4.$ 
    Then, by Observation~\ref{obs:pm_2-cut-connection} every perfect matching of $G_2$ can be combined with either $M_1$ and $M_2$, or $M_3$ and $M_4$, to obtain two perfect matchings in $G$ containing any prescribed edge in $E(G_2) \setminus \{x_2y_2\}$.

    Proof of point $iv)$ follows similarly as above.
    \end{proof}

\begin{theorem}\label{thm:inf_family_k_lonely_edges}
    For every positive integer $k$ there is a bridgeless cubic graph with exactly $k$ lonely edges.
\end{theorem}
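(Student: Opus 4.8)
The plan is to exhibit, for each $k$, an explicit bridgeless cubic (multi)graph with exactly $k$ lonely edges, using one flexible family for all $k\ge 2$ together with a separate construction for $k=1$; throughout I would track perfect matchings across $2$-edge-cuts by means of Observation~\ref{obs:pm_2-cut-connection} and Lemma~\ref{lem:lonely_edges_in_2_cut_connection}.

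For $k\ge 2$ I would use a ``necklace'' $R_m$ assembled from $m$ beads $\beta_1,\dots,\beta_m$, where each bead $\beta_i$ consists of two vertices $u_i,v_i$ joined by a pair of parallel edges, and consecutive beads are joined cyclically by single edges $f_i=v_iu_{i+1}$ (indices mod $m$). This is a bridgeless cubic multigraph on $2m$ vertices (parallel edges being admitted by the paper). The key structural point is that in any perfect matching the external edges $f_1,\dots,f_m$ are either all present or all absent: once one $f_i$ is used, matching $v_i$ outside its bead forces $u_i$ onto $f_{i-1}$, and this propagates around the cycle (symmetrically for absence). Hence $R_m$ has exactly $2^m+1$ perfect matchings, namely the $2^m$ matchings using one of the two parallel edges in each bead, together with the single matching $\{f_1,\dots,f_m\}$. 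Consequently each $f_i$ lies in exactly one perfect matching and is lonely, whereas each parallel edge lies in $2^{m-1}\ge 2$ of them and is not; so I would conclude $l(R_m)=m$ for every $m\ge 2$. (For $m=1$ the construction degenerates to a triple edge, which is already covered as $k=3$ by $R_3$.)

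For $k=1$ a single necklace cannot succeed, because the only way a $2$-edge-cut \emph{creates} lonely edges is to create the two cut edges together (the ``moreover'' clause of Lemma~\ref{lem:lonely_edges_in_2_cut_connection}(i)). I would therefore move to the regime of part (ii), where the cut edges are not lonely and every lonely edge of the resulting graph comes from a single side. Concretely, take $G_1$ to be the $8$-vertex Klee graph obtained from the prism by expanding one vertex into a triangle, and choose an edge $pq$ of $G_1$ with the two properties that (a) $pq$ is not lonely in $G_1$, and (b) exactly one lonely edge $h$ of $G_1$ has $pq$ in its unique perfect matching; a direct inspection of the five perfect matchings of $G_1$ exhibits such a $pq$ (the relevant matching through $pq$ carries a single lonely edge, the others none). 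Forming $G=(G_1,pq)\oplus(K_4,e)$ and using that $K_4$ has exactly one perfect matching through any fixed edge and two avoiding it, a short refinement of the argument for Lemma~\ref{lem:lonely_edges_in_2_cut_connection}(ii) via Observation~\ref{obs:pm_2-cut-connection} shows that $h$ remains lonely in $G$ while every other lonely edge of $G_1$ acquires a second perfect matching and dies, and no new lonely edge appears; hence $l(G)=1$.

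The routine part is the matching bookkeeping: the counts for $R_m$ and the per-edge status in $G_1$ and $K_4$, all mechanical once Observation~\ref{obs:pm_2-cut-connection} is in hand. The main obstacle is precisely the case $k=1$: since lonely edges produced at a $2$-edge-cut always come in pairs, one is forced into the more delicate situation of a \emph{non-lonely} cut across which all but exactly one lonely edge of a side must be destroyed, and checking that exactly one survives is where the care lies. Every other value of $k$ is immediate from the necklace, whose length equals the lonely-edge count outright.
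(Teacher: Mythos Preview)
Your argument is correct but follows a different path from the paper's.

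The paper proves a slightly stronger statement by induction on $k$: it first exhibits a $3$-connected example with exactly one lonely edge (one of the graphs in Figure~\ref{fig:1lonelyEdge}), and then, assuming a graph $G_{k-1}$ whose $k-1$ lonely edges all lie in a common perfect matching, glues a copy of the $k=1$ example along lonely edges via a $2$-cut-connection and invokes Lemma~\ref{lem:lonely_edges_in_2_cut_connection}(i) to obtain $G_k$ with $k$ lonely edges (again all in one perfect matching). Your approach instead dispatches every $k\ge 2$ in one stroke with the necklace multigraphs $R_m$, which is more elementary and avoids any induction; the price is that the case $k=1$ no longer fits the template and you have to engineer it separately, using the regime of Lemma~\ref{lem:lonely_edges_in_2_cut_connection}(ii) (a non-lonely cut edge) together with a direct count in $K_4$ and the bicorn. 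Both routes are valid; the paper's uniform inductive scheme yields the extra invariant ``all lonely edges in one perfect matching'' for free, while your necklace gives an explicit and instantly verifiable family (and, incidentally, also satisfies that invariant). Your $k=1$ construction is the only place where you go slightly beyond the stated lemmas, but the refinement you sketch via Observation~\ref{obs:pm_2-cut-connection} is straightforward and the choice of $pq$ in the bicorn (e.g.\ the non-lonely triangle edge) does indeed leave exactly one lonely survivor.
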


\begin{proof}
   We prove a slightly stronger version of the statement by induction. Indeed, we prove the existence of an example with the additional property that all $k$ edges belong to the same perfect matching.    
    For $k=1$, an example is shown in Figure~\ref{fig:1lonelyEdge} and was found by computer, see Section \ref{sec:3ec}.

    Let $G_1$ be a bridgeless cubic graph with a unique lonely edge $x_1y_1$, then there is exactly one perfect matching containing $x_1y_1$. Now, let $k\ge2$ and assume by the inductive hypothesis that there is a graph $G_{k-1}$ with exactly $k-1$ lonely edges all belonging to the same perfect matching. Let $x_2y_2$ be one of these lonely edges. Then, by Lemma~\ref{lem:lonely_edges_in_2_cut_connection}, $(G_1,x_1y_1) \oplus (G_{k-1},x_2y_2)$ is a bridgeless cubic graph with exactly $k$ lonely edges: $x_1x_2, y_1y_2$ and the $k-2$ lonely edges in $G_{k-1}$ different from $x_2y_2$. Note that all such edges are contained in the same perfect matching. The statement then follows.
\end{proof}

By Lemma~\ref{lem:lonely_edges_in_2_cut_connection}, it is sufficient to study this problem in the $3$-connected case (and thus without parallel edges). Indeed, let $G$ be a bridgeless cubic graph with $2$-edge-cuts. We can successively apply the $2$-cut reduction operation until the graph is a disjoint union of cubic graphs $G_1, \ldots, G_k$: each $G_i$ is  either a graph with 2 vertices and 3 parallel edges or it is $3$-connected.

\section{\texorpdfstring{$\bm 3$}{3}-connected non-double covered cubic graphs}\label{sec:3ec}

In this section we present a complete characterization of $\mathcal{U}_k$ for $k\in\{3,\dots,6\}$ and we give some insights on $\mathcal{U}_1$ and $\mathcal{U}_2$.

Let $G$ be a cubic graph and $v\in V(G)$. Then, $G^v$ is the cubic graph obtained from $G$ by replacing the vertex $v$ with a triangle. See Figure~\ref{fig:blowup_example}.
More precisely, if $x_1,x_2$ and $x_3$ are the three vertices adjacent to $v$ in $G$ and $v_1,v_2,v_3$ are three new vertices, we have $$V(G^v)=(V(G) \setminus \{v\}) \cup \{v_1,v_2,v_3\},$$ 
$$E(G^v)=(E(G) \setminus \{vx_1,vx_2,vx_3\}) \cup \{v_1x_1,v_2x_2,v_3x_3\} \cup \{v_1v_2,v_2v_3,v_3v_1\}.$$ 

\begin{figure}[!htb]
    \newcommand{\size}{3cm} 
    \def\Arrow{\raisebox{12\height}{$\longrightarrow$\,\,}}
    \centering
    \tikzstyle{vertex} = [fill, black, circle, scale=0.5]
    \begin{tikzpicture}
        \node[vertex, label=above right:$v$] (0) at (0,0) {};
        \node[regular polygon, regular polygon sides=3, minimum size=\size] (T) at (0,0) {};
        \foreach \x in {1,2,...,3} {
            \node[vertex] (\x) at (T.corner \x) {};
            \draw[] (0) to (\x);
        }
        \node[label=$x_1$] at (1){};
        \node[label=below left:$x_2$] at (2){};
        \node[label=below right:$x_3$] at (3){};
    \end{tikzpicture}
    \Arrow
    \begin{tikzpicture}
        \node[regular polygon, regular polygon sides=3, minimum size=0.5*\size, draw] (T1) at (0,0) {};
        \foreach \x in {1,2,...,3} {
            \node[vertex] (\x) at (T1.corner \x) {};
        }
        \node[regular polygon, regular polygon sides=3, minimum size=\size] (T2) at (0,0) {};
        \foreach \x in {1,2,...,3} {
            \node[vertex] (\x') at (T2.corner \x) {};
            \draw (\x) to (\x');
        }

        \node[label={[label distance=-0.1cm]right:$v_1$}] at (1){};
        \node[label={[label distance=-0.25cm]above left:$v_2$}] at (2){};
        \node[label={[label distance=-0.25cm]above right:$v_3$}] at (3){};
        
        \node[label=$x_1$] at (1'){};
        \node[label=below left:$x_2$] at (2'){};
        \node[label=below right:$x_3$] at (3'){};
    \end{tikzpicture}
    \caption{An example of replacing $v$ with a triangle.}
    \label{fig:blowup_example}
\end{figure}
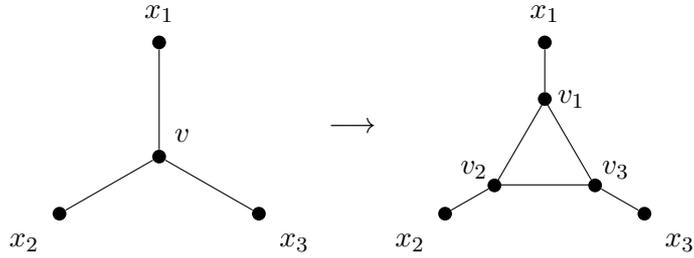

Along the entire paper we largely make use of this operation so we will now give some more details about the terminology we will use. 
We refer to the three edges $v_1v_2,v_2v_3$ and $v_3v_1$ of $G^v$ as the edges of the \emph{new triangle}, or more briefly as the \emph{new edges}. Each edge $v_ix_i$ will be said to be the edge \emph{opposite} to $v_jv_k$, for pairwise different $i,j,k \in \{1,2,3\}$. We refer to all other edges different from the three new edges as \emph{old edges} (thus including the three edges $v_1x_1,v_2x_2$ and $v_3x_3$). Moreover,  with a slight abuse of terminology, we use the same label for all old edges both when we consider them as edges of $G$ and of $G^v$. In particular, we identify an edge $vx$ in $G$ with the corresponding edge $v_ix$ in $G^v$.  
Finally, when we consider the reverse operation, we say that a graph is obtained by \emph{contracting} a triangle into a vertex. 

\begin{definition}\label{def:Klee}
    A graph $G$ is a \emph{Klee-graph} if $G = K_4$ or there exists a Klee-graph  $H$ and a vertex $w\in V(H)$ such that $G = H^w$. We denote the set of Klee-graphs by $\mathcal{K}$.
\end{definition}

The class of Klee-graphs is mainly known for its property of being the class of all planar cubic graphs admitting a unique $3$-edge-coloring. This property was conjectured by Fiorini and Wilson in~\cite{FW78} and then proved by Fowler in his PhD Thesis (see~\cite{F98}).
As already remarked, by Lemma~19 in~\cite{EKSS10}, we have that $\mathcal{U}\subset \mathcal{K}$. Note also that $\mathcal{U}\neq \mathcal{K}$, indeed there are Klee-graphs which are double covered, the smallest of which being the truncation of $K_4$ (see Figure~\ref{fig:truncK4}).

Let $G\in \mathcal{K}$. Assume that $H$ can be constructed from $G$ by a sequence of expansions of vertices into triangles. Then we say that $H$ is a \emph{descendant} of $G$. If $v\in V(G)$ then we call $G^v$ a \emph{child} of $G$.

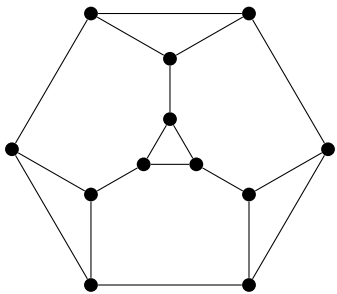
\begin{figure}[!htb]
    \centering
    \tikzstyle{vertex} = [fill, black, circle, scale=0.5]
    \begin{tikzpicture}
        \def\r{1.2}
        \def\rm{0.4}
        \def\R{1.2}
        \def\c{0}
        \foreach \x in {0,...,2}
            \node[vertex] (\fpeval{\x + 1}) at ({\rm*cos(90 + \x * 120)},{\rm*sin(90 + \x * 120)}) {};

        \foreach \x in {1,...,2}
            \node[vertex] (\fpeval{\x + 4}) at ({\r*cos(270 + \x * 120)},{\r*sin(270 + \x * 120) + \R}) {};
        \node[vertex] (4) at ({\c*\r*cos(270)},{\c*\r*sin(270) + \R}) {};

        \foreach \x in {1,...,2}
            \node[vertex] (\fpeval{\x + 7}) at ({\r*cos(2*120 - 90 + \x * 120) + \R*cos(2*120 + 90)},{\r*sin(2*120 - 90 + \x * 120) + \R*sin(2*120 + 90)}) {};
        \node[vertex] (7) at ({\c*\r*cos(2*120 - 90) + \R*cos(2*120 + 90)},{\c*\r*sin(2*120 - 90) + \R*sin(2*120 + 90)}) {};

        \foreach \x in {1,...,2}
            \node[vertex] (\fpeval{\x + 10}) at ({\r*cos(1*120 - 90 + \x * 120) + \R*cos(1*120 + 90)},{\r*sin(1*120 - 90 + \x * 120) + \R*sin(1*120 + 90)}) {};
        \node[vertex] (10) at ({\c*\r*cos(1*120 - 90) + \R*cos(1*120 + 90)},{\c*\r*sin(1*120 - 90) + \R*sin(1*120 + 90)}) {};

        \draw (1) to (2) to (3) to (1);
        \draw (4) to (5) to (6) to (4);
        \draw (7) to (8) to (9) to (7);
        \draw (10) to (11) to (12) to (10);

        \draw (1) to (4);
        \draw (2) to (10);
        \draw (3) to (7);
        \draw (5) to (9);
        \draw (6) to (11);
        \draw (8) to (12);
    \end{tikzpicture}
    \caption{The truncation of $K_4$ is the smallest Klee-graph which is double covered.}
    \label{fig:truncK4}
\end{figure}

\begin{lemma}
\label{lem:G^v KG then G KG}
Let $G$ be a cubic graph, $|V(G)|\geq 4$ and $v \in V(G)$. If $G^v$ is a Klee-graph, then $G$ is a Klee-graph.
\end{lemma}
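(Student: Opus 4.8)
The plan is to argue by induction on $|V(G)|$, using the defining construction of Klee-graphs from Definition~\ref{def:Klee}. Write $T_v=\{v_1,v_2,v_3\}$ for the triangle created when $v$ is expanded, so that $G$ is recovered from $G^v$ by contracting $T_v$. Since $|V(G)|\ge 4$ we have $|V(G^v)|=|V(G)|+2\ge 6$, so $G^v\neq K_4$; hence by Definition~\ref{def:Klee} there are a Klee-graph $H$ and a vertex $w\in V(H)$ with $G^v=H^w$. Let $T_w$ be the triangle created by the expansion of $w$, so that $H$ is obtained from $G^v$ by contracting $T_w$. The whole argument then turns on comparing the two triangles $T_v$ and $T_w$ inside $G^v$.

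The key structural input I would use is that every Klee-graph is $3$-edge-connected; this follows by a short induction, since $K_4$ is $3$-edge-connected and a triangle-expansion of a $3$-edge-connected cubic graph is again $3$-edge-connected. In particular $G^v$ has no $2$-edge-cut. From this I would deduce that $T_v$ and $T_w$ are \emph{either equal or vertex-disjoint}: two distinct triangles of a cubic graph cannot meet in exactly one vertex (that vertex would have degree at least $4$), and if two distinct triangles shared an edge, their $4$-vertex union would send only two edges to the rest of $G^v$, a $2$-edge-cut. The same edge count shows that if $T_v$ and $T_w$ are disjoint they are joined by at most one edge, since otherwise their $6$-vertex union (proper, as $|V(G^v)|\ge 6$) would again be separated from the remainder by at most two edges.

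With this dichotomy the induction is short. If $T_v=T_w$, then contracting this common triangle yields simultaneously $G$ and $H$, so $G=H$ is a Klee-graph. If instead $T_v$ and $T_w$ are disjoint, then, being expansion triangles (each vertex has a single edge leaving the triangle) joined by at most one edge, they may be contracted simultaneously to a cubic graph $H_1$; writing $u$ and $w^\ast$ for the two resulting vertices, we have that $G^v$ is obtained from $H_1$ by expanding both $u$ and $w^\ast$, that $H=(H_1)^{u}$, and that $G=(H_1)^{w^\ast}$. Now $H=(H_1)^{u}$ is a Klee-graph and $|V(H_1)|=|V(G)|-2\ge 4$, so the induction hypothesis applied to $H_1$ and $u$ gives that $H_1$ is a Klee-graph; hence $G=(H_1)^{w^\ast}$ is a child of $H_1$ and therefore a Klee-graph as well.

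For the base case $|V(G)|=4$ one checks that the only Klee-graph on $6$ vertices is the triangular prism $K_4^{\,w}$, and contracting either of its two triangles returns $K_4$, so $G=K_4\in\mathcal K$. I expect the main obstacle to be precisely the overlap analysis of $T_v$ and $T_w$: everything hinges on excluding the degenerate configurations (a shared edge, or two disjoint triangles with two connecting edges), and the cleanest way to do this is through the $3$-edge-connectivity of Klee-graphs, which itself deserves a short, carefully stated auxiliary lemma. Once these degeneracies are ruled out, the commutation of the two triangle-contractions and the inductive step are routine. As an alternative route, one could instead invoke Fowler's characterisation of $\mathcal K$ as the planar cubic graphs with a unique $3$-edge-colouring: contracting $T_v$ preserves planarity, and the $3$-edge-colourings of $G$ and $G^v$ are in bijection because the colours on a new triangle are forced by the three opposite edges, so uniqueness transfers from $G^v$ to $G$.
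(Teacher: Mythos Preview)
Your proof is correct and matches the paper's approach: induct on $|V(G)|$, use the $3$-edge-connectivity of Klee-graphs to conclude that the expansion triangle $T_v$ and the Klee-definition triangle $T_w$ either coincide or are vertex-disjoint, and in the disjoint case contract both and invoke the inductive hypothesis on the resulting smaller graph. Your treatment is a little more detailed than the paper's (the ``at most one connecting edge'' observation is not actually needed for the induction to go through), and the parenthetical ``proper, as $|V(G^v)|\ge 6$'' should read $|V(G^v)|>6$, which holds once you are past the base case.
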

\begin{proof}
We prove the statement by induction on $|V(G)|$. If $|V(G)|=4$, then $G^v$ is isomorphic to the prism and the statement holds. Next, assume $|V(G)|>4$ and that the lemma is true for all $G'$ with $|V(G')|<|V(G)|$. Let $T$ be the triangle in $G^v$ that is induced by the new edges. Since $G^v$ is a Klee-graph, there is a triangle $T'$ in $G^v$ whose contraction produces a Klee-graph $H$. If $T=T'$, then we are done. Hence, assume $T \neq T'$. Note that since every Klee-graph is 3-edge-connected and $G^v$ is not isomorphic to $K_4$, the triangles $T$ and $T'$ are vertex-disjoint. Let $H'$ be the graph obtained from $G^v$ by contracting $T'$ and then $T$. Since $H$ is a Klee-graph, the graph $H'$ is also a Klee-graph by the inductive hypothesis. As a consequence, $G$ is a Klee-graph, since it can be obtained from $H'$ by replacing a vertex by a triangle.
\end{proof}

\begin{observation}\label{obs:trNotContainingVertex}
    If $G$ is a Klee-graph, then for every vertex $v$ of $G$ there is a triangle not containing $v$.
\end{observation}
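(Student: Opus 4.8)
The plan is to use induction following the recursive structure of Klee-graphs given in Definition~\ref{def:Klee}, inducting on $|V(G)|$ (equivalently, on the number of triangle-expansions needed to build $G$ from $K_4$).

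For the base case $G = K_4$, the argument is immediate: $K_4$ has four vertices, and for any vertex $v$ the remaining three vertices induce a triangle that does not contain $v$. For the inductive step, I would write $G = H^w$ for some Klee-graph $H$ and vertex $w \in V(H)$, and recall that $V(G) = (V(H) \setminus \{w\}) \cup \{w_1, w_2, w_3\}$, where $w_1, w_2, w_3$ are the three vertices of the new triangle $T_w$. The verification then splits naturally into two cases according to whether the prescribed vertex $v$ is old or new. If $v$ is an old vertex, i.e.\ $v \in V(H) \setminus \{w\}$, then the new triangle $T_w$ itself is a triangle of $G$ avoiding $v$, since $v$ is distinct from each $w_i$, and we are done. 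If instead $v$ is one of the new vertices $w_1, w_2, w_3$, I would invoke the inductive hypothesis applied to $H$ at the vertex $w$ to obtain a triangle $T$ of $H$ not containing $w$.

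The one point requiring care—and the only real subtlety in the argument—is confirming that this triangle $T$ of $H$ survives the expansion and reappears as a genuine triangle of $G = H^w$. Here I would use that the operation $H \mapsto H^w$ modifies only the edges incident to $w$: the three edges $w x_1, w x_2, w x_3$ are deleted and the triangle $T_w$ together with the edges $w_i x_i$ is inserted, while every other vertex and edge of $H$ is preserved as an old vertex/edge of $G$. Since $T$ does not contain $w$, none of its three edges is incident to $w$, so all of them are old edges that persist unchanged in $G$; thus $T$ is a triangle of $G$. Finally, $T$ avoids $v$ because every vertex of $T$ lies in $V(H) \setminus \{w\}$ whereas $v \in \{w_1, w_2, w_3\}$ is a new vertex. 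This completes the inductive step and hence the proof.

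I do not anticipate a serious obstacle: the statement is structurally simple and the induction closes cleanly. The main thing to get right is the bookkeeping in the second case, namely the explicit observation that expansion leaves intact every edge not incident to the expanded vertex, which is exactly what guarantees the inherited triangle remains a triangle after the blow-up.
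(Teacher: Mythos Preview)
Your inductive argument is correct and is the natural way to verify the claim. Note, however, that the paper states this as an \emph{observation} without supplying any proof, so there is no argument in the paper to compare against; your write-up simply fills in the routine verification the authors left implicit.
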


Let $G$ be a cubic graph and $v \in V(G)$. A  \emph{join rooted in} $v$ of $G$ is a spanning subgraph in which $v$ has degree 3 and every other vertex has degree 1.

\begin{lemma}\label{lem:join}
    Let $G$ be a Klee-graph. For every $v\in V(G)$ there exists a join rooted in $v$.
\end{lemma}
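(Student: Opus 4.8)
The plan is to recast the existence of a $v$-join as a statement about perfect matchings and then induct on $|V(G)|$ using the recursive structure of Klee-graphs. First I would record the following reformulation: if $x_1,x_2,x_3$ are the neighbours of $v$, then every $v$-join must contain all three edges $vx_1,vx_2,vx_3$ so that $v$ has degree $3$, and then each $x_i$ already has degree $1$; hence the remaining edges of the join form a perfect matching of $G-N[v]$, where $N[v]=\{v,x_1,x_2,x_3\}$. Conversely, any perfect matching $M$ of $G-N[v]$ yields the $v$-join $M\cup\{vx_1,vx_2,vx_3\}$. Thus it suffices to prove that $G-N[v]$ admits a perfect matching for every $v\in V(G)$.

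For the induction, the base case $G=K_4$ is immediate, since then $N[v]=V(G)$ and $G-N[v]$ is empty. For the inductive step I would invoke Definition~\ref{def:Klee} to write $G=H^w$ for some Klee-graph $H$ and some $w\in V(H)$; let $T=\{w_1,w_2,w_3\}$ be the new triangle, with $w_i$ joined to the external neighbour $x_i$, so that contracting $T$ recovers $H$ with $w$ adjacent to $x_1,x_2,x_3$. Note $H$ is a bridgeless (indeed $3$-edge-connected) cubic graph with fewer vertices, so the induction hypothesis applies to it. I would then split into two cases according to whether $v$ lies on $T$.

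If $v$ is a triangle vertex, say $v=w_1$, then $N[v]=\{w_1,w_2,w_3,x_1\}$ is exactly the triangle together with one external neighbour, so contracting $T$ gives $G-N[v]\cong H-\{w,x_1\}$. Since $wx_1$ is an edge of the bridgeless cubic graph $H$, it belongs to some perfect matching of $H$ (as recalled in the introduction); deleting $wx_1$ from it yields a perfect matching of $H-\{w,x_1\}$, whose edges avoid $w$ and hence lift verbatim to a perfect matching of $G-N[v]$. If instead $v$ is not a triangle vertex, then $v$ is a genuine vertex of $H$ different from $w$, and I would apply the induction hypothesis to obtain a $v$-join $J_H$ of $H$. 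In $J_H$ the contracted vertex $w$ has degree $1$, say via the edge $wx_i$; I would lift $J_H$ to $G$ by replacing $wx_i$ with $w_ix_i$, adding the triangle edge $w_jw_k$ where $\{i,j,k\}=\{1,2,3\}$, and keeping every other (old) edge of $J_H$. This gives $w_i,w_j,w_k$ each degree $1$ while leaving every old vertex with its $J_H$-degree, producing a $v$-join of $G$.

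The genuinely delicate step I expect is checking that this last lift preserves all degree conditions, the only subtlety being the interaction between $v$ and the contracted vertex. If $v$ equals some external neighbour $x_i$, then, since $v$ has degree $3$ in $J_H$, the edge $wv$ necessarily lies in $J_H$, which forces $w$'s unique edge to be $wv$; the lift then assigns $v$ its third incident edge $w_iv$ correctly. If $v$ is not adjacent to $w$, all three edges of $v$ are old and are preserved unchanged. In either case the external neighbours $x_j,x_k$ not joined to $w$ retain their original degree-$1$ edge, so no conflict arises and $x_i$ receives degree $1$ through $w_ix_i$. This bookkeeping in the non-triangle case is the main point to be careful about, whereas the triangle case is disposed of cleanly by the fact that every edge of a bridgeless cubic graph lies in a perfect matching.
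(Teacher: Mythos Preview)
Your argument is correct. The inductive core---contract a triangle, obtain a $v$-join in the smaller Klee-graph, and lift it back---matches the paper's proof, but you organise the induction differently. The paper first invokes Observation~\ref{obs:trNotContainingVertex} to choose a triangle $T$ \emph{not} containing $v$, then uses Lemma~\ref{lem:G^v KG then G KG} to conclude that the contraction is again a Klee-graph; with $v\notin T$ only your Case~2 arises and the lift is immediate. You instead take the triangle handed to you by Definition~\ref{def:Klee} (so the contracted graph is a Klee-graph by definition, and neither Observation~\ref{obs:trNotContainingVertex} nor Lemma~\ref{lem:G^v KG then G KG} is needed), at the cost of an extra case when $v$ lies on that triangle, which you dispatch with Sch\"onberger's theorem. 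Both routes are short; the paper's avoids the case split and the external matching result, while yours avoids the two auxiliary facts about Klee-graphs and makes the lift completely explicit. Your reformulation of a $v$-join as a perfect matching of $G-N[v]$ is a clean way to see why Case~1 reduces to Sch\"onberger.
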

\begin{proof}
    We prove the statement by induction on the order $2n$ of $G$. If $n=2$, then $G = K_4$ and one can see easily that this holds.

    If $n>2$, assume the assertion holds for all Klee-graphs of order less than $2n$. We will show that the assertion holds for a graph $G$ of order $2n$. Let $v$ be an arbitrary vertex of $G$.
    
    By Observation~\ref{obs:trNotContainingVertex}, there is a triangle $T$ not containing $v$. Let $G'$ be the graph obtained by contracting $T$ to a vertex $x$. Note that $G'$ is a Klee-graph by Lemma~\ref{lem:G^v KG then G KG}. By inductive hypothesis $G'$ contains a join $F'$ rooted in $v$. It is easy to see that $F'$ can be extended to a join $F$ rooted in $v$ of $G$. 
\end{proof}

\begin{lemma}\label{lem:blowUpToTriangle}
    Let $G\in \mathcal{U}$, $v \in V(G)$, $e$ be an arbitrary old edge of  $G^v$ and let $f$ and $f'$ be a new edge and its opposite edge, respectively. Then the following holds:
\begin{itemize}
\item[i)] $e$ is lonely in $G^v$ if and only if $e$ is lonely in $G$ and there is no join rooted in $v$ of $G$ containing $e$. 
\item[ii)] $f'$ is not lonely in $G^v$.
\item[iii)] $f$ is lonely in $G^v$ if and only if $f'$ is lonely in $G$. 

\end{itemize}

\end{lemma}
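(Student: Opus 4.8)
The plan is to describe the perfect matchings of $G^v$ entirely in terms of two simpler families of subgraphs of $G$, and then reduce both items to a single counting identity. The starting point is to analyze how an arbitrary perfect matching $M$ of $G^v$ meets the new triangle on $\{v_1,v_2,v_3\}$. Since this triangle has three vertices, each of which must be saturated, there are exactly two possibilities: either $M$ contains all three opposite edges $v_1x_1,v_2x_2,v_3x_3$ and no new edge (call such an $M$ of \emph{type A}), or $M$ contains exactly one opposite edge $v_ix_i$ together with the unique new edge $v_jv_k$ opposite to it (\emph{type B}). Indeed, if two of $v_1,v_2,v_3$ were matched by their opposite edges, then the third could only be matched by its own opposite edge as well, returning us to type A. I would record this dichotomy first, observing that types A and B are disjoint, since a type-A matching uses no new edge while a type-B matching uses exactly one.

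Next I would set up two bijections, throughout using the identification $v_ix_i\equiv vx_i$ of old edges. Contracting the new triangle (and deleting the surviving new edge) sends a type-B matching to a perfect matching of $G$; conversely, since every perfect matching of $G$ uses exactly one edge at $v$, each perfect matching of $G$ extends to exactly one type-B matching. Hence type-B matchings of $G^v$ biject with perfect matchings of $G$, and an old edge $e$ lies in a type-B matching iff it lies in the corresponding perfect matching of $G$. Similarly, a type-A matching consists of $\{v_1x_1,v_2x_2,v_3x_3\}$ together with a perfect matching of $G-\{v,x_1,x_2,x_3\}$, which is precisely the data of a $v$-join of $G$ (its three edges at $v$ corresponding to the three opposite edges); so type-A matchings biject with $v$-joins of $G$, and an old edge $e$ lies in a type-A matching iff it lies in the corresponding $v$-join. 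Combining these, for every old edge $e$ the number of perfect matchings of $G^v$ through $e$ equals $p(e)+j(e)$, where $p(e)$ is the number of perfect matchings of $G$ through $e$ and $j(e)$ the number of $v$-joins of $G$ through $e$.

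Item (i) then follows from this identity together with two facts about $G$: being $3$-connected it is bridgeless, so by Schönberger's theorem $p(e)\ge 1$ for every edge $e$; and since $\mathcal U\subseteq\mathcal K$ it is a Klee-graph, so by Lemma~\ref{lem:join} a $v$-join exists. Now $e$ is lonely in $G^v$ iff $p(e)+j(e)=1$; because $p(e)\ge 1$, this forces $p(e)=1$ and $j(e)=0$, i.e.\ $e$ is lonely in $G$ and no $v$-join contains $e$, and the converse is immediate. For the ``moreover'' assertion, $f'=v_ix_i$ corresponds to $vx_i$, and by definition every $v$-join contains all three edges at $v$; hence $j(f')$ equals the number of $v$-joins, which is positive by Lemma~\ref{lem:join}, giving $p(f')+j(f')\ge 2$, so $f'$ is not lonely in $G^v$.

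Finally, item (ii) is more direct: a new edge $f=v_jv_k$ appears in no type-A matching, and among type-B matchings only in those whose single opposite edge is $f'=v_ix_i$. Under the type-B bijection these correspond exactly to the perfect matchings of $G$ through $vx_i=f'$, so the number of perfect matchings of $G^v$ through $f$ equals $p(f')$; thus $f$ is lonely in $G^v$ iff $p(f')=1$ iff $f'$ is lonely in $G$. I expect the only delicate point to be the bookkeeping in the two bijections, namely verifying that the type-A/type-B dichotomy is exhaustive and disjoint and carefully tracking the identification $v_ix_i\equiv vx_i$ (including the boundary case $e=v_ix_i$, where the equivalence in (i) degenerates into exactly the ``moreover'' statement); once the counting identity is in place, the rest is immediate from bridgelessness and the existence of a $v$-join.
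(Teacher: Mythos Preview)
Your proof is correct and follows essentially the same approach as the paper: both rest on the dichotomy that a perfect matching of $G^v$ either uses exactly one new edge (bijecting with perfect matchings of $G$) or none (bijecting with $v$-joins of $G$). You package this as the counting identity $p(e)+j(e)$ and explicitly invoke Sch\"onberger to ensure $p(e)\ge 1$, whereas the paper argues the same correspondence via direct case analysis on a pair of perfect matchings; the content is the same.
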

\begin{proof}
We first prove $i)$. Every perfect matching in $G$ containing $e$ can be extended to a perfect matching of $G^v$ by adding a suitable new edge. Hence, if $e$ is not lonely in $G$, then $e$ is not lonely in $G^v$. Furthermore, a join rooted in $v$ in $G$ corresponds to a perfect matching in $G^v$. Hence, if $e$ is contained in a join rooted in $v$ of $G$, then $e$ is not lonely in $G^v$. On the other hand, if $e$ is not lonely in $G^v$, then there are two distinct perfect matchings in $G^v$ containing $e$. If one of them does not contain an edge of the new triangle, then it is a join rooted in $v$ in $G$ containing $e$. If both of them contain an edge of the new triangle, then we obtain two distinct perfect matchings in $G$ containing $e$, i.e.\ $e$ is not lonely in $G$.

Point $ii)$ follows by Point $i)$ and Lemma~\ref{lem:join}. 

Next, we prove $iii)$. If $f$ is not lonely in $G^v$, then there are two distinct perfect matchings in $G^v$ containing $f$. Note that both of them contain $f'$. Hence, by removing $f$ from both of them we obtain two distinct perfect matchings of $G$ containing $f'$, i.e.\ $f'$ is not lonely in $G$. On the other hand, if $f'$ is not lonely in $G$, then there are two distinct perfect matchings in $G$ containing $f'$. Adding $f$ to both of them yields two distinct perfect matchings of $G^v$ containing $f$, i.e.\ $f$ is not lonely in $G^v$. Hence, the equivalence is proved. 
\end{proof}

\begin{corollary}
\label{cor:formula special edges}
For every $G \in \mathcal{U}$, $v \in V(G)$,
\begin{equation*}
l(G^v)=l(G)-p_v,
\end{equation*}
where $p_v$ is the number of lonely edges of $G$ not incident with $v$, which are contained in a join rooted in $v$ of $G$.
\end{corollary}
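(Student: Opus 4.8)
The goal is to count how the lonely edges change when we expand a vertex $v$ into a triangle, and Lemma~\ref{lem:blowUpToTriangle} already classifies exactly which edges are lonely in $G^v$. So the plan is to partition the edges of $G^v$ into the three natural types — new edges, old edges opposite to a new edge, and all remaining old edges — and use the lemma to track the lonely count type by type.

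The key steps I would carry out are as follows. First I would observe that the edges of $G^v$ fall into these disjoint classes: the three new triangle edges $f_1,f_2,f_3$; the three opposite edges $f_1',f_2',f_3'$; and the old edges of $G$ not incident with $v$ (these appear unchanged in $G^v$). By part (ii) of Lemma~\ref{lem:blowUpToTriangle}, each new edge $f_i$ is lonely in $G^v$ exactly when its opposite $f_i'$ is lonely in $G$; and by part (i), the opposite edges $f_i'$ are never lonely in $G^v$. Since the edges $f_1',f_2',f_3'$ correspond precisely to the three edges of $G$ incident with $v$, the contribution of the new edges to $l(G^v)$ equals the number of edges incident with $v$ that are lonely in $G$, while these very edges themselves contribute nothing to $l(G^v)$. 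Hence the lonely edges of $G$ incident with $v$ are, in effect, ``carried over'' to the new triangle and the net change among all these six special edges is zero.

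It then remains to account for the old edges not incident with $v$. By part (i) of Lemma~\ref{lem:blowUpToTriangle}, such an edge $e$ is lonely in $G^v$ if and only if $e$ is lonely in $G$ \emph{and} $e$ lies in no $v$-join of $G$. Thus each old edge that is lonely in $G$ but contained in some $v$-join of $G$ is lost from the lonely count, and these are exactly the $p_v$ edges in the statement; every other old edge's lonely status is unchanged. Combining the two accounts — zero net change among the six special edges, and a loss of exactly $p_v$ among the remaining old edges — gives $l(G^v)=l(G)-p_v$.

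The one point needing care, and the only plausible obstacle, is the bookkeeping of the identification between the edges incident with $v$ in $G$ and the opposite edges $f_i'$ in $G^v$: one must check that no edge is double-counted and that an edge incident with $v$ which is lonely in $G$ is correctly reflected as a lonely \emph{new} edge rather than as a lonely opposite edge. Once the three-way partition is pinned down and the paper's identification convention (an edge $vx$ in $G$ is identified with $v_ix$ in $G^v$) is invoked, the equality follows by simply summing the per-class contributions, so I expect no serious difficulty beyond this careful matching.
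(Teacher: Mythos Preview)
Your proposal is correct and matches the paper's approach: the paper states this as an immediate corollary of Lemma~\ref{lem:blowUpToTriangle} with no separate proof, and your three-way partition of $E(G^v)$ together with the per-class bookkeeping is exactly how one reads the formula off that lemma. The only care point you flag—the identification of the edges incident with $v$ in $G$ with the opposite edges $f_i'$ in $G^v$—is handled by the paper's stated convention, so there is nothing missing.
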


\begin{proposition}\label{prop:inductiveDefinition}
    $\mathcal{U}$ can be defined inductively as follows: \begin{enumerate}[i)]
        \item\label{it:K4inU} $K_4\in\mathcal{U}$,
        \item\label{it:noPMWithTriangleEdges} If $G\in \mathcal{U}$, $e\in E(G)$ is lonely, and  $v\in V(G)$ such that it is either incident with $e$ or there is no join rooted in $v$ containing $e$, then $G^v\in \mathcal{U}$.
    \end{enumerate}
\end{proposition}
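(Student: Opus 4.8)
The plan is to establish the two inclusions separately: that every graph produced by the inductive rules lies in $\mathcal{U}$ (soundness), and that every graph of $\mathcal{U}$ is produced (completeness). Throughout I will use that $\mathcal{U}\subseteq\mathcal{K}$, that Klee-graphs are closed under taking children and are $3$-connected (for cubic graphs vertex- and edge-$3$-connectivity coincide), and I will transport loneliness across a single triangle expansion by means of Lemma~\ref{lem:blowUpToTriangle} and Lemma~\ref{lem:join}.

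For soundness the base case is immediate: $K_4$ is $3$-connected and each of its edges lies in exactly one of its three perfect matchings, so $K_4\in\mathcal{U}$. For the inductive step, let $G\in\mathcal{U}$ with lonely edge $e$ and let $v$ satisfy the stated condition. Since $G\in\mathcal{K}$, the child $G^v$ is again a Klee-graph and hence $3$-connected, so it remains only to produce a lonely edge of $G^v$. If $v$ is incident with $e$, then in $G^v$ the edge $e$ becomes the old edge opposite to some new edge $f$, and Lemma~\ref{lem:blowUpToTriangle}(ii) gives that $f$ is lonely in $G^v$. Otherwise $v$ is not incident with $e$, so the disjunction forces that no $v$-join of $G$ contains $e$; then $e$ is an old edge of $G^v$ disjoint from the new triangle, and Lemma~\ref{lem:blowUpToTriangle}(i) gives that $e$ is itself lonely in $G^v$. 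In either case $G^v\in\mathcal{U}$.

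For completeness I would induct on $|V(G)|$. If $|V(G)|=4$ then $G=K_4$ is the unique Klee-graph on four vertices and there is nothing to prove. If $|V(G)|>4$, then since $G\in\mathcal{K}$ and $G\neq K_4$, Definition~\ref{def:Klee} provides a Klee-graph $H$ and a vertex $w\in V(H)$ with $G=H^w$, where $H$ has fewer vertices and is $3$-connected. The task is to verify that $H\in\mathcal{U}$ and that expanding $w$ is an admissible operation, for then the induction hypothesis applies to $H$ and reproduces $G=H^w$. To this end I pick a lonely edge $\ell$ of $G$ and split into cases according to the triangle arising from $w$. If $\ell$ is a new edge of this triangle, Lemma~\ref{lem:blowUpToTriangle}(ii) makes its opposite edge $e$ lonely in $H$, and $e$ is incident with $w$, so $H\in\mathcal{U}$ and the rule applies with $e$ and $w$. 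If $\ell$ is an old edge, Lemma~\ref{lem:blowUpToTriangle}(i) shows $\ell$ is lonely in $H$ with no $w$-join of $H$ containing it; moreover, by the same lemma the three old edges opposite to the new edges are never lonely in $G$, so $\ell$ corresponds to an edge of $H$ not incident with $w$, and the rule again applies with $e=\ell$ and $w$.

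The main obstacle I expect lies in the completeness step, namely the bookkeeping that aligns a given lonely edge of $G$ with the particular triangle contracted to form $H$, and in the old-edge case the verification that this lonely edge is disjoint from $w$, so that precisely the hypothesis ``no $w$-join contains $e$'' is the one available. All of the real work is carried by Lemma~\ref{lem:blowUpToTriangle}, which translates loneliness across one triangle contraction; what remains is a careful case distinction together with the facts that children of Klee-graphs are Klee-graphs and that Klee-graphs are $3$-connected.
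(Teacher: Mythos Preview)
Your argument is correct and follows essentially the same route as the paper: both directions rest on Lemma~\ref{lem:blowUpToTriangle}. Two minor differences are worth noting. First, for completeness the paper fixes a lonely edge $e$ of $G$ \emph{first} and then invokes Observation~\ref{obs:trNotContainingVertex} to pick a triangle disjoint from $e$; this sidesteps your case analysis on whether $\ell$ is a new or an old edge (and the separate check that the three edges $w_ix_i$ are never lonely). Second, in your completeness step you apply Lemma~\ref{lem:blowUpToTriangle} to $H$ before you know $H\in\mathcal{U}$, whereas the lemma is stated with that hypothesis; this is harmless because the proof of the lemma only uses that the graph is a Klee-graph (via Lemma~\ref{lem:join}), which $H$ is, but you should flag it or argue that step directly as the paper does.
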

\begin{proof}
    By~\ref{it:K4inU}, $K_4$ belongs to $\mathcal{U}$. Given a graph $G\in \mathcal{U}$ and a vertex $w$ satisfying the assumption of~\ref{it:noPMWithTriangleEdges}, we get by Corollary~\ref{cor:formula special edges} that $G^w$ belongs to $\mathcal{U}$.

    Now let $G$ be an arbitrary graph in $\mathcal{U}$. We show that it can be obtained by replacing a vertex  with a triangle in a smaller graph. Let $e$ be a lonely edge of $G$. By Observation~\ref{obs:trNotContainingVertex} there exists some triangle $T$ in $G$ not containing $e$. Let $H$ be the graph obtained from $G$ by contracting this triangle to a vertex $w$. Note that $e$ is a lonely edge of $H$, since it being contained in two distinct perfect matchings would give two distinct perfect matchings in $G$ containing $e$. Thus, $H$ is in $\mathcal{U}$. Since $H$ has a perfect matching containing $e$, there is a perfect matching in $G$ that contains $e$ and exactly one edge of $T$. Since the edge set of a join rooted in $v$ in $H$ is a perfect matching in $G$ which contains no edge of $T$, there is no join rooted in $v$ in $H$ containing $e$. Hence, $G$ is obtained from the procedure in~\ref{it:noPMWithTriangleEdges}.
\end{proof}

Corollary~\ref{cor:formula special edges} and Proposition~\ref{prop:inductiveDefinition} assure that the maximum number of lonely edges in a $3$-connected cubic graph is six and that this maximum is attained by the smallest graph in $\cal U$, that is $K_4$.

Using a computer search we determined all smallest graphs in $\mathcal{U}_k$ for every $k \in \{0,\ldots ,6\}$, see Figures~\ref{fig:truncK4}--\ref{fig:6lonelyEdges}, 
respectively.  In all next figures the lonely edges are dashed.

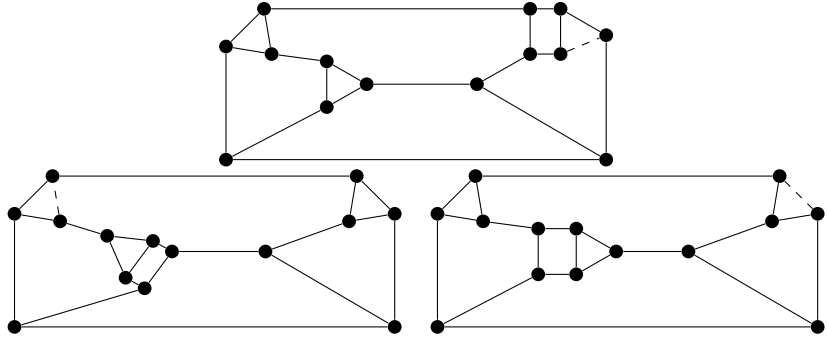
\begin{figure}[!htb]
    \newcommand{\x}{1}
    \centering
    \tikzstyle{vertex} = [fill, black, circle, scale=0.5]
    \begin{tikzpicture}[scale=\x]
        \node[vertex] (1) at (-0.4,0.4) {};
        \node[vertex] (1') at (-0.5,1) {};
        \node[vertex] (1") at (-1,0.5) {};

        \node[vertex] (2) at (-1,-1) {};
        \def\rm{0.35}
        \foreach \x in {0,...,2}
            \node[vertex] (\fpeval{\x + 3}) at ({\rm*cos(\x * 120)+0.5},{\rm*sin(\x * 120)}) {};
        \node[vertex] (6) at (2.3,0) {};
        
        \node[vertex] (7) at (3.4,0.4) {};
        \node[vertex] (7') at (3.4, 1) {};
        \node[vertex] (7") at (4, 0.65) {};
        
        \node[vertex] (8) at (4,-1) {};
        \path (7) ++(-0.4,0) node[vertex] (9) {};
        \path (7') ++(-0.4,0) node[vertex] (10) {};
        
        \draw (1) to (1');
        \draw (1') to (1");
        \draw (1) to (1");
        \draw (1) to (4);
        \draw (1') to (10);
        \draw (1") to (2);
        \draw (2) to (5);
        \draw (2) to (8);
        \draw (3) to (6);
        \draw (3) to (4) to (5) to (3);

        \draw (6) to (9);
        \draw (6) to (8);
        \draw (7") to (8);
        \draw (7) to (7');
        \draw (7') to (7");
        \draw[dashed] (7) to (7");
        \draw (7) to (9);
        \draw (7') to (10);
        \draw (9) to (10);
    \end{tikzpicture}\\
    \begin{tikzpicture}[scale=\x]
        \node[vertex] (1) at (-0.4,0.4) {};
        \node[vertex] (1') at (-0.5,1) {};
        \node[vertex] (1") at (-1,0.5) {};

        \node[vertex] (2) at (-1,-1) {};
        \def\rm{0.35}
        \foreach \x in {0,...,2}
            \node[vertex] (\fpeval{\x + 3}) at ({\rm*cos(23.5781785 + \x * 120)+0.5},{\rm*sin(23.5781785 + \x * 120)}) {};
        \node[vertex] (6) at (2.3,0) {};
        
        \node[vertex] (7) at (3.4,0.4) {};
        \node[vertex] (7') at (3.5, 1) {};
        \node[vertex] (7") at (4, 0.5) {};
        
        \node[vertex] (8) at (4,-1) {};
        
        \path (5) ++(0.25,-0.14) node[vertex] (9) {};
        \path (3) ++(0.25,-0.14) node[vertex] (10) {};
        
        \draw[dashed] (1) to (1');
        \draw (1') to (1");
        \draw (1) to (1");
        \draw (1) to (4);
        \draw (1') to (7');
        \draw (1") to (2);
        \draw (2) to (9);
        \draw (2) to (8);
        \draw (10) to (6);
        \draw (3) to (4) to (5) to (3);
        \draw (3) to (10);
        \draw (5) to (9);

        \draw (6) to (7);
        \draw (6) to (8);
        \draw (7") to (8);
        \draw (7) to (7');
        \draw (7') to (7");
        \draw (7) to (7");
        \draw (9) to (10);

    \end{tikzpicture}\quad
    \begin{tikzpicture}[scale=\x]
        \node[vertex] (1) at (-0.4,0.4) {};
        \node[vertex] (1') at (-0.5,1) {};
        \node[vertex] (1") at (-1,0.5) {};

        \node[vertex] (2) at (-1,-1) {};
        \def\rm{0.35}
        \foreach \x in {1,...,2}
            \node[vertex] (\fpeval{\x + 3}) at ({\rm*cos(\x * 120)+0.5},{\rm*sin(\x * 120)}) {};
        \foreach \x in {0,...,2}
            \node[vertex] (\fpeval{\x + 9}) at ({\rm*cos(\x * 120)+1},{\rm*sin(\x * 120)}) {};
        \node[vertex] (6) at (2.3,0) {};
        
        \node[vertex] (7) at (3.4,0.4) {};
        \node[vertex] (7') at (3.5, 1) {};
        \node[vertex] (7") at (4, 0.5) {};
        
        \node[vertex] (8) at (4,-1) {};
        
        \draw (1) to (1');
        \draw (1') to (1");
        \draw (1) to (1");
        \draw (1) to (4);
        \draw (1') to (7');
        \draw (1") to (2);
        \draw (2) to (5);
        \draw (2) to (8);
        \draw (9) to (6);
        \draw (10) to (4) to (5) to (11) to (9) to (10);
        \draw (10) to (11);

        \draw (6) to (7);
        \draw (6) to (8);
        \draw (7") to (8);
        \draw (7) to (7');
        \draw[dashed] (7') to (7");
        \draw (7) to (7");
    \end{tikzpicture}
    \caption{The three smallest graphs in $\mathcal{U}_1$.}
    \label{fig:1lonelyEdge}
\end{figure}

\begin{figure}[!htb]
    \centering
    \tikzstyle{vertex} = [fill, black, circle, scale=0.5]
    \begin{tikzpicture}
        \node[vertex] (1) at (-0.4,0.4) {};
        \node[vertex] (1') at (-0.5,1) {};
        \node[vertex] (1") at (-1,0.5) {};

        \node[vertex] (2) at (-1,-1) {};
        \def\rm{0.35}
        \foreach \x in {0,...,2}
            \node[vertex] (\fpeval{\x + 3}) at ({\rm*cos(\x * 120)+0.5},{\rm*sin(\x * 120)}) {};
        \node[vertex] (6) at (2.3,0) {};
        
        \node[vertex] (7) at (3.4,0.4) {};
        \node[vertex] (7') at (3.5, 1) {};
        \node[vertex] (7") at (4, 0.5) {};
        
        \node[vertex] (8) at (4,-1) {};
        
        \draw[dashed] (1) to (1');
        \draw (1') to (1");
        \draw (1) to (1");
        \draw (1) to (4);
        \draw (1') to (7');
        \draw (1") to (2);
        \draw (2) to (5);
        \draw (2) to (8);
        \draw (3) to (6);
        \draw (3) to (4) to (5) to (3);

        \draw (6) to (7);
        \draw (6) to (8);
        \draw (7") to (8);
        \draw (7) to (7');
        \draw[dashed] (7') to (7");
        \draw (7) to (7");
    \end{tikzpicture}
    \caption{The smallest graph in $\mathcal{U}_2$.}
    \label{fig:2lonelyEdges}
\end{figure}

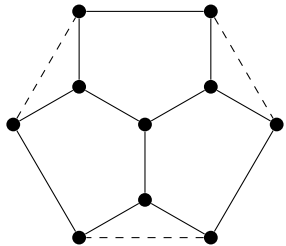
\begin{figure}[!htb]
    \centering
    \tikzstyle{vertex} = [fill, black, circle, scale=0.5]
    \begin{tikzpicture}
        \node[vertex] (1) at (0,0) {};
        \node[vertex] (2) at (0,-1) {};
        \node[vertex] (3) at (-0.86602540378,0.5) {};
        \node[vertex] (4) at (0.86602540378,0.5) {};
        \node[vertex] (5) at (0.86602540378,-1.5) {};
        \node[vertex] (6) at (-0.86602540378,-1.5) {};
        \node[vertex] (7) at (-1.73205080757,0) {};
        \node[vertex] (8) at (-0.86602540378,1.5) {};
        \node[vertex] (9) at (0.86602540378,1.5) {};
        \node[vertex] (10) at (1.73205080757,0) {};
        \draw (1) to (2);
        \draw (1) to (3);
        \draw (1) to (4);
        \draw (2) to (5);
        \draw (2) to (6);
        \draw (3) to (7);
        \draw (3) to (8);
        \draw (4) to (9);
        \draw (4) to (10);
        \draw[dashed] (5) to (6);
        \draw (5) to (10);
        \draw (6) to (7);
        \draw[dashed] (7) to (8);
        \draw (8) to (9);
        \draw[dashed] (9) to (10);
    \end{tikzpicture}\qquad
    \caption{The tricorn is the smallest graph in $\mathcal{U}_3$.
    }
    \label{fig:3lonelyEdges}
\end{figure}

\begin{figure}[!htb]
    \centering
    \tikzstyle{vertex} = [fill, black, circle, scale=0.5]
    \begin{tikzpicture}
        \node[vertex] (1) at (0.6,0.4) {};
        \node[vertex] (1') at (0.5,1) {};
        \node[vertex] (1") at (0,0.5) {};

        \node[vertex] (2) at (0,-1) {};
        \node[vertex] (3) at (1.2,0) {};
        \node[vertex] (4) at (2.8,0) {};
        \node[vertex] (5) at (4,1) {};
        \node[vertex] (6) at (3.4,-0.4) {};
        \node[vertex] (6') at (3.5, -1) {};
        \node[vertex] (6") at (4, -0.5) {};
        
        \draw[dashed] (1) to (1');
        \draw (1') to (1");
        \draw (1) to (1");
        \draw (1) to (3);
        \draw (1') to (5);
        \draw (1") to (2);
        \draw[dashed] (2) to (3);
        \draw (2) to (6');
        \draw (3) to (4);
        \draw[dashed] (4) to (5);
        \draw (4) to (6);
        \draw (5) to (6");
        \draw[dashed] (6) to (6');
        \draw (6') to (6");
        \draw (6") to (6);
    \end{tikzpicture}\qquad
    \begin{tikzpicture}
        \node[vertex] (1) at (0.6,0.4) {};
        \node[vertex] (1') at (0.5,1) {};
        \node[vertex] (1") at (0,0.5) {};

        \node[vertex] (2) at (0,-1) {};
        \node[vertex] (3) at (1.2,0) {};
        \node[vertex] (4) at (2.8,0) {};
        
        \node[vertex] (5) at (3.4,0.4) {};
        \node[vertex] (5') at (3.5, 1) {};
        \node[vertex] (5") at (4, 0.5) {};
        
        \node[vertex] (6) at (4,-1) {};
        
        \draw[dashed] (1) to (1') to (1");
        \draw (1) to (1");
        \draw (1) to (3);
        \draw (1') to (5');
        \draw (1") to (2);
        \draw (2) to (3);
        \draw (2) to (6);
        \draw (3) to (4);
        \draw (4) to (5);
        \draw (4) to (6);
        \draw (5") to (6);
        \draw[dashed] (5) to (5') to (5");
        \draw (5) to (5");
    \end{tikzpicture}
    \caption{The smallest graphs in $\mathcal{U}_4$.}
    \label{fig:4lonelyEdges}
\end{figure}

\begin{figure}[!htb]
    \centering
    \tikzstyle{vertex} = [fill, black, circle, scale=0.5]
    \begin{tikzpicture}
        \node[vertex] (1) at (0.6,0.4) {};
        \node[vertex] (1') at (0.5,1) {};
        \node[vertex] (1") at (0,0.5) {};

        \node[vertex] (2) at (0,-1) {};
        \node[vertex] (3) at (1.2,0) {};
        \node[vertex] (4) at (2.8,0) {};
        \node[vertex] (5) at (4,1) {};
        \node[vertex] (6) at (4,-1) {};
        
        \draw[dashed] (1) to (1') to (1");
        \draw (1) to (1");
        \draw (1) to (3);
        \draw (1') to (5);
        \draw (1") to (2);
        \draw[dashed] (2) to (3);
        \draw (2) to (6);
        \draw (3) to (4);
        \draw[dashed] (4) to (5);
        \draw (4) to (6);
        \draw[dashed] (5) to (6);
    \end{tikzpicture}
    \caption{The bicorn is the only graph in $\mathcal{U}_5$.}
    \label{fig:5lonelyEdges}
\end{figure}
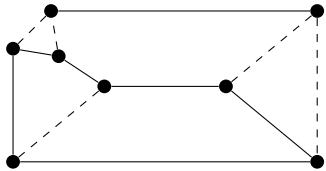

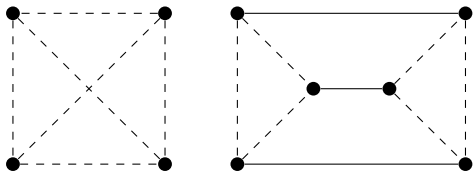
\begin{figure}[!htb]
    \centering
    \tikzstyle{vertex} = [fill, black, circle, scale=0.5]
    \begin{tikzpicture}
        \node[vertex] (1) at (1,1) {};
        \node[vertex] (2) at (-1,1) {};
        \node[vertex] (3) at (-1,-1) {};
        \node[vertex] (4) at (1,-1) {};
        \draw[dashed] (1) to (2);
        \draw[dashed] (1) to (3);
        \draw[dashed] (1) to (4);
        \draw[dashed] (2) to (3);
        \draw[dashed] (2) to (4);
        \draw[dashed] (3) to (4);
    \end{tikzpicture}\qquad
    \begin{tikzpicture}
        \node[vertex] (1) at (0,1) {};
        \node[vertex] (2) at (0,-1) {};
        \node[vertex] (3) at (1,0) {};
        \node[vertex] (4) at (2,0) {};
        \node[vertex] (5) at (3,1) {};
        \node[vertex] (6) at (3,-1) {};
        \draw[dashed] (1) to (2);
        \draw[dashed] (1) to (3);
        \draw (1) to (5);
        \draw[dashed] (2) to (3);
        \draw (2) to (6);
        \draw (3) to (4);
        \draw[dashed] (4) to (5);
        \draw[dashed] (4) to (6);
        \draw[dashed] (5) to (6);
    \end{tikzpicture}
    
    \caption{The only two graphs in $\mathcal{U}_6$.}
    \label{fig:6lonelyEdges}
\end{figure}

In particular, the next corollary easily follows by our previous considerations and computations.

\begin{corollary}\label{cor:5 and 6 are finite}
    The following statements hold.
    \begin{enumerate} 
        \item[i)] $l(G)\leq 6$ for every $G\in\mathcal{U}$;
        \item[ii)] $K_4$ and the prism are the only elements of $\mathcal{U}_6$ (see Figure~\ref{fig:6lonelyEdges});
        \item[iii)] The bicorn is the only element of $\mathcal{U}_5$ (see  Figure~\ref{fig:5lonelyEdges}).
    \end{enumerate}
\end{corollary}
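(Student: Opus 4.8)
The plan is to obtain all three items from the inductive description of $\mathcal{U}$ in Proposition~\ref{prop:inductiveDefinition} together with the counting identity $l(G^v)=l(G)-p_v$ of Corollary~\ref{cor:formula special edges}. Item~1 is essentially immediate: since $p_v\ge 0$, each triangle expansion satisfies $l(G^v)\le l(G)$, and as every graph of $\mathcal{U}$ is built from $K_4$ by a sequence of such expansions, while $l(K_4)=6$ (the three perfect matchings of $K_4$ partition its six edges, so all of them are lonely), a straightforward induction gives $l(G)\le 6$ for all $G\in\mathcal{U}$.

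For Items~2 and~3 I would read the characterization backwards. If $G\in\mathcal{U}$ is not $K_4$, then by Proposition~\ref{prop:inductiveDefinition} we may write $G=H^v$ for a strictly smaller $H\in\mathcal{U}$, obtained by contracting a triangle of $G$, and Corollary~\ref{cor:formula special edges} gives $l(H)=l(G)+p_v$ with $p_v\ge 0$ and $l(H)\le 6$. Both items then reduce to controlling, for the finitely many relevant $H$, the quantity $p_v$, i.e.\ how many lonely edges of $H$ not incident with $v$ lie in some $v$-join.

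For Item~2 the equation forces $l(H)=6$ and $p_v=0$, so $H\in\mathcal{U}_6$ as well, and the problem becomes: which expansions of an element of $\mathcal{U}_6$ keep $p_v=0$? In $K_4$ the unique $v$-join consists of the three edges at $v$ and avoids the opposite triangle, so $p_v=0$ for every $v$; hence $K_4^v$, which is the prism, again lies in $\mathcal{U}_6$. The decisive step is that the prism admits no further such expansion: by Lemma~\ref{lem:blowUpToTriangle} its lonely edges are exactly the six triangle edges, and for each vertex $v$ the unique $v$-join contains precisely one of them, namely an edge of the triangle opposite to $v$, so $p_v=1$. By vertex-transitivity this holds for every $v$, so no child of the prism stays in $\mathcal{U}_6$, and an induction on $|V(G)|$ yields $\mathcal{U}_6=\{K_4,\text{prism}\}$.

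Item~3 runs analogously, except that $l(G)=5$ now permits $l(H)\in\{5,6\}$. If $l(H)=6$ then $H\in\{K_4,\text{prism}\}$ and $p_v=1$; the value $p_v=1$ rules out $K_4$ (where all $p_v=0$) and selects the prism, the resulting graph being exactly the bicorn. If $l(H)=5$ then $p_v=0$ and $H\in\mathcal{U}_5$, hence $H$ is the bicorn by the induction hypothesis, and this branch must be shown empty. I expect the main obstacle to be precisely the verification that $p_v\ge 1$ for every vertex $v$ of the bicorn, which closes this branch. I would reduce it to a few cases using the automorphisms of the bicorn: the two triangles may be swapped and the two remaining vertices interchanged, leaving three vertex orbits. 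For one representative $v$ of each orbit, computing a $v$-join amounts to matching the four vertices lying outside the closed neighbourhood of $v$, and in each case such a matching can be chosen so as to contain a lonely triangle edge not incident with $v$, giving $p_v\ge 1$. With this established the branch $l(H)=5$ is impossible, the induction closes, and $\mathcal{U}_5=\{\text{bicorn}\}$ follows.
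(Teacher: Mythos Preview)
Your argument is correct and follows the same route the paper intends: Item~1 is the immediate monotonicity from $l(G^v)=l(G)-p_v$ and the inductive description of $\mathcal{U}$, while Items~2 and~3 are obtained by running the induction backwards and checking $p_v$ on $K_4$, the prism, and the bicorn. The paper states the corollary as a direct consequence of Corollary~\ref{cor:formula special edges} and Proposition~\ref{prop:inductiveDefinition} together with its small-case computations, without spelling out the case analysis; your write-up makes those verifications explicit and entirely by hand, which is a welcome addition rather than a departure.
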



We computed the smallest examples as follows. First, we generated $3$-connected cubic graphs using \texttt{snarkhunter}~\cite{BGM11}. Then, we computed for each graph the number of lonely edges by generating all of their perfect matchings and counting which edges appear in precisely one of them. The implementation of this algorithm can be found on GitHub~\cite{GMMRW23}. It also contains auxiliary methods for outputting all children of an input graph $G$ and for outputting precisely those children $G^v$ for which $l(G^v) = l(G)$. 

To verify the correctness of these implementations, we performed various tests. For filtering the $3$-connected graphs containing at least one lonely edge, we created two independent implementations. The results from both implementations are in complete agreement, which we tested up to order $24$. Moreover, using the program which outputs children of a given input graph, we started from $K_4$ and generated its children. We stored those that were non-double covered and repeated this process. The counts were again in agreement for every considered order in every class $\mathcal{U}_k$. We checked this up to order $30$.

The smallest examples of every class $\mathcal{U}_k$ (for $1 \leq k \leq 6$) can be obtained from the database of interesting graphs at the \textit{House of Graphs}~\cite{CDG23} by searching for the keywords ``lonely edge''.


Following Corollary~\ref{cor:formula special edges}, larger elements of $\mathcal{U}_k$ can be constructed by starting with a smallest graph in $\mathcal{U}_k$ and repeatedly choosing a vertex $v$ with $p_v=0$ (if it exists) and replacing it by a triangle. Note that we will prove in the next section that this is the only way to construct larger graphs in $\mathcal{U}_4$.

In what follows we give a characterization of $\mathcal{U}_4$ and $\mathcal{U}_3$, and some results regarding $\mathcal{U}_2$ and $\mathcal{U}_1$.

\subsection{Characterization of \texorpdfstring{$\mathcal{U}_4$}{U4}}
Let $G$ be an arbitrary cubic graph with a triangle $T = x_1x_2x_3$ and let $s \in \{1,2,3\}$.
We will call the cubic graph obtained by replacing the vertex $x_s$ with a new triangle, a \emph{$T$-extension of index} $s$ of $G$.  
When we need to iterate this process, we will label the vertices of the new triangle again as $x_1,x_2,x_3$, and we will do it in such a way that, for both indices $j \neq s$, the new vertex $x_j$ is adjacent in the obtained graph to the vertex corresponding to $x_j$ in the original graph $G$.  Figure~\ref{fig:T-extension example} illustrates the notation we just introduced with an example.

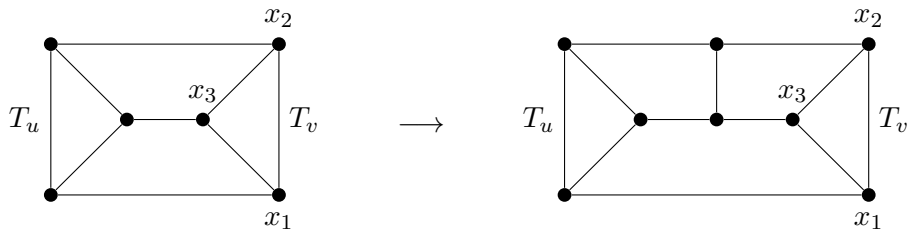
\begin{figure}[!htb]
    \centering
    \tikzstyle{vertex} = [fill, black, circle, scale=0.5]
        \def\Arrow{\raisebox{10\height}{$\longrightarrow$\,\,}}

    \begin{tikzpicture}
        \node[vertex] (1) at (0,1) {};
        \node[vertex] (2) at (0,-1) {};
        \node[vertex] (3) at (1,0) {};
        \node[vertex, label=$x_3$] (4) at (2,0) {};
        \node[vertex, label=$x_2$] (5) at (3,1) {};
        \node[vertex, label=below:$x_1$] (6) at (3,-1) {};
        \draw[] (1) to node[midway, left] {$T_u$} (2);
        \draw[] (1) to (3);
        \draw (1) to (5);
        \draw[] (2) to (3);
        \draw (2) to (6);
        \draw (3) to (4);
        \draw[] (4) to (5);
        \draw[] (4) to (6);
        \draw[] (5) to node[midway, right] {$T_v$} (6) ;
    \end{tikzpicture}
    \qquad\Arrow\qquad
    \begin{tikzpicture}
        \node[vertex] (1) at (0,1) {};
        \node[vertex] (2) at (0,-1) {};
        \node[vertex] (3) at (1,0) {};
        \node[vertex, label=$x_3$] (4) at (3,0) {};
        \node[vertex, label=$x_2$] (5) at (4,1) {};
        \node[vertex, label=below:$x_1$] (6) at (4,-1) {};
        \node[vertex] (7) at (2,1) {};
        \node[vertex] (8) at (2,0) {};
        \draw[] (1) to node[midway, left] {$T_u$} (2);
        \draw[] (1) to (3);
        \draw (1) to (7) to (5);
        \draw[] (2) to (3);
        \draw (2) to (6);
        \draw (3) to (8) to (4);
        \draw[] (4) to (5);
        \draw[] (4) to (6);
        \draw[] (5) to node[midway, right] {$T_v$} (6) ;
        \draw (7) to (8);
    \end{tikzpicture}
    
    \caption{Applying a $T_v$-extension of index $1$ to the prism for a given triangle $T_v$.}
    \label{fig:T-extension example}
\end{figure}




In particular, let $Pr$ be the prism graph with $6$ vertices obtained by replacing an arbitrary vertex of $K_4$ with a triangle. Note that $Pr$ has exactly two vertex-disjoint triangles, say $T_u$ and $T_v$ (see Figure~\ref{fig:T-extension example}).

A graph is an \emph{extended prism} if it is either the prism $Pr$ or if it is obtained by applying a $T$-extension to a smaller extended prism $P'$, where $T$ is a triangle of $P'$. Let $m\ge1$ and consider the ordered list $(s_1,...,s_m)$ such that $s_i \in \{1,2,3\}$ for all $i=1,\ldots,m$. 
An extended prism \emph{admits the extension pattern} $(s_1,\dots,s_m)$ if it can be obtained by starting from $Pr$
and applying consecutively a sequence of
$T_v$-extensions of indices $s_1,\ldots,s_m$, each time updating the triangle $T_v$ with the new triangle added as explained before. Note that an extended prism can admit multiple extension patterns. Throughout the paper we will also use the terminology "extended prism \emph{with} extension pattern $(s_1,\dots,s_m)$". 


By symmetry, observe that applying $T_u$-extensions to $Pr$ with reversed extension pattern $(s_m,\ldots,s_1)$, and each time updating $T_u$ with the new triangle, gives rise to the same graph.



\begin{observation}\label{obs:subsequence}
By previous considerations, it follows that if the extension pattern of an extended prism $Pr'$ (or any other permutation of its three labels) appears as a consecutive subsequence of the extension pattern, or of its reverse, of an extended prism $Pr''$, then $Pr''$ is a descendant of $Pr'$.
\end{observation}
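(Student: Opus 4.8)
The plan is to exploit the two facts highlighted just before the statement: that each individual $T_v$-extension (and, symmetrically, each $T_u$-extension) is by definition a single expansion of a vertex into a triangle, hence produces a child; and that, by the symmetry remark, extending an extended prism from its $T_u$-end is the same operation as extending from its $T_v$-end read in reverse. Consequently, extending at the $T_v$-end appends a symbol to the (right-reading) extension pattern, while extending at the $T_u$-end prepends a symbol, and in both cases the resulting graph is a child of the previous one. Since a descendant is precisely a graph reachable by a sequence of such expansions, it suffices to realize the pattern of $Pr''$ from that of $Pr'$ by appending at one end and prepending at the other.

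First I would reduce to the case where the pattern of $Pr'$ is a consecutive subsequence of the pattern of $Pr''$ itself, and not merely of its reverse: since, by the symmetry remark, the extended prism with pattern $(t_1,\dots,t_n)$ is isomorphic to the one with reversed pattern $(t_n,\dots,t_1)$, the hypothesis ``appears in the pattern, or its reverse, of $Pr''$'' may be replaced by ``appears in the pattern of $Pr''$''. Likewise, the parenthetical freedom ``or any other permutation of its three labels'', together with the remark that relabeling the three vertices of the initial triangle yields an isomorphic extended prism, lets me fix once and for all a labeling of $Pr'$ consistent with the occurrence inside $Pr''$; this is pure bookkeeping and needs to be checked only once.

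Now write the pattern of $Pr''$ as $(t_1,\dots,t_a,\,s_1,\dots,s_m,\,u_1,\dots,u_b)$, where $(s_1,\dots,s_m)$ is the pattern of $Pr'$ and $a,b\ge 0$. Starting from $Pr'$, I would first apply the $T_v$-extensions of indices $u_1,\dots,u_b$ in this order; each is a triangle expansion, and by the definition of the extension pattern the result is the extended prism with pattern $(s_1,\dots,s_m,u_1,\dots,u_b)$, its $T_u$-end being untouched. Then, working at that $T_u$-end, I would apply the $T_u$-extensions of indices $t_a,t_{a-1},\dots,t_1$. By the symmetry remark each such step is again a triangle expansion, and its effect is to prepend the corresponding symbol to the right-reading pattern; applying them in this order prepends $(t_1,\dots,t_a)$, producing exactly $(t_1,\dots,t_a,s_1,\dots,s_m,u_1,\dots,u_b)$, i.e.\ $Pr''$. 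As every step is the expansion of a vertex into a triangle, $Pr''$ is a descendant of $Pr'$.

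The main point requiring care is the claim that the two families of extensions do not interfere: after the $T_v$-extensions, the triangle $T_u$ and its incident structure are unchanged, so the subsequent $T_u$-extensions act exactly as they would at the front of the prism and genuinely prepend to the pattern. This amounts to promoting the symmetry remark, stated for $Pr$ itself, to an arbitrary extended prism, using that an extension at one end is a local operation disjoint from the other end in a sufficiently long extended prism. I expect this independence of the two ends, together with the label bookkeeping in the reduction step, to be the only delicate points; the remainder is a direct unwinding of the definitions of extension pattern and descendant.
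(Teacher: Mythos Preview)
Your proposal is correct and follows precisely the route the paper intends: the paper states this only as an observation ``by previous considerations'' (the symmetry remark and the relabeling remark), and your argument is a careful unpacking of exactly those two ingredients, first reducing via relabeling and reversal, then appending at the $T_v$-end and prepending at the $T_u$-end. The one delicate point you flag---that the two ends do not interfere because the expansions are local and the triangles $T_u$, $T_v$ are vertex-disjoint in any extended prism---is indeed the only thing to check, and it holds since $T_v$-extensions never touch $T_u$.
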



We can now characterize the set $\mathcal{U}_4$ as follows.

\begin{theorem}\label{thm:Characterization_U4}
    A graph $G$ belongs to $\mathcal{U}_4$ if and only if $G$ is an extended prism which admits extension pattern $(1,2)$ or $(\underbrace{1,\dots,1}_{k \text{ times}})$, for $k\ge2$.
\end{theorem}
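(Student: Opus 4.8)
The plan is to prove both directions using the inductive machinery built in Corollary~\ref{cor:formula special edges} and Proposition~\ref{prop:inductiveDefinition}, together with the explicit identification of $\mathcal{U}_5$ (the bicorn) and $\mathcal{U}_6$ ($K_4$ and the prism) from Corollary~\ref{cor:5 and 6 are finite}. The key observation driving everything is that, by Corollary~\ref{cor:formula special edges}, expanding a vertex $v$ of $G\in\mathcal{U}$ into a triangle changes the lonely-edge count by $l(G^v)=l(G)-p_v$, where $p_v$ counts the lonely edges of $G$ not incident with $v$ that lie in some $v$-join. So to stay inside $\mathcal{U}_4$ while growing the graph one must expand a vertex $v$ with $p_v=0$. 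The strategy is therefore to understand exactly which vertices $v$ of a $\mathcal{U}_4$-graph satisfy $p_v=0$, and to show the resulting expansions match the two stated extension patterns.

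For the easier direction (sufficiency), I would first verify directly that the extended prism with pattern $(3,1)$ and with pattern $(3,\dots,3)$ for each small $k$ lies in $\mathcal{U}_4$: the base cases are the two smallest graphs of Figure~\ref{fig:4lonelyEdges}, which are exactly the extended prisms of patterns $(3,1)$ and $(3)$-then-$(3)$ (equivalently $(3,3)$). Then I would argue inductively that appending another $3$ to a pattern of the form $(3,\dots,3)$ corresponds to expanding a vertex $v$ with $p_v=0$, so by Proposition~\ref{prop:inductiveDefinition} and Corollary~\ref{cor:formula special edges} the lonely-edge count is preserved at $4$. The heart here is a local computation in the neighborhood of the updated triangle $T_v$: identify the four lonely edges explicitly and check that the vertex being expanded is incident to one of them (or that no $v$-join meets the other lonely edges), forcing $p_v=0$.

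For the harder direction (necessity), I would take an arbitrary $G\in\mathcal{U}_4$ and run Proposition~\ref{prop:inductiveDefinition} backwards: contract a triangle $T$ avoiding a chosen lonely edge to obtain a smaller graph $H\in\mathcal{U}$ from which $G=H^w$. Since $l(G)=4$ and $l(H^w)=l(H)-p_w\le l(H)$, the predecessor $H$ has $l(H)\in\{4,5,6\}$. If $l(H)\in\{5,6\}$ then $H$ is one of the finitely many graphs from Corollary~\ref{cor:5 and 6 are finite} (bicorn, $K_4$, prism), and I would check by hand which vertex expansions of these drop the count to exactly $4$ and confirm the results are the claimed base extended prisms. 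If $l(H)=4$, then $H\in\mathcal{U}_4$ is a strictly smaller graph and $p_w=0$, so by induction $H$ is already an extended prism of one of the two types, and I must show that any $p_w=0$ expansion of such a graph again yields an extended prism of the same catalogued form. This last step is the main obstacle: I expect to need a careful case analysis classifying all vertices $v$ with $p_v=0$ in an extended prism, showing they are precisely the vertices of the current triangle $T_v$ together with (in the $(3,1)$ case) possibly one distinguished vertex, and that expanding them reproduces the extension-pattern description. Establishing that no ``off-pattern'' expansion can have $p_v=0$ — i.e.\ that expanding elsewhere always captures a lonely edge in a $v$-join and hence strictly decreases $l$ — requires tracking the four lonely edges and the available $v$-joins through the induction, and this bookkeeping is where the real work lies.
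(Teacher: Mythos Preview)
Your approach is essentially the paper's: both directions proceed by the same induction via Proposition~\ref{prop:inductiveDefinition} and Corollary~\ref{cor:formula special edges}, bottoming out in the finite families $\mathcal{U}_5$ and $\mathcal{U}_6$, and the paper likewise identifies the two $10$-vertex base cases $Pr_1$ (pattern $(3,1)$) and $Pr_2$ (pattern $(3,3)$) as the children of the bicorn landing in $\mathcal{U}_4$. One clarification that will simplify the case analysis you anticipate: $Pr_1$ has \emph{no} vertex $v$ with $p_v=0$ (every vertex admits a $v$-join through some lonely edge), so the $(3,1)$ pattern is a dead end and the inductive step only needs to handle the $(3,\dots,3)$ family, where the unique (up to the $T_u\leftrightarrow T_v$ symmetry) vertex with $p_v=0$ is the one incident to two lonely edges---expanding it appends another $3$.
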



\begin{proof}
    The only Klee-graphs on at most $8$ vertices are those depicted in Figures~\ref{fig:5lonelyEdges} and~\ref{fig:6lonelyEdges}. In particular there is only one graph with $5$ lonely edges and it has $8$ vertices. From this graph we can obtain three non-isomorphic children. One of them is the tricorn graph, see Figure~\ref{fig:3lonelyEdges}, which has $3$ lonely edges. The other two are extended prisms $Pr_1$ and $Pr_2$ with extension patterns $(1,2)$ and $(1,1)$ respectively, see Figure~\ref{fig:smallest_U_4}. (The same two graphs are also depicted in Figure~\ref{fig:4lonelyEdges}: here the reader can better see how they are constructed from the unique graph in $\mathcal{U}_5$.) Both of them are in $\mathcal{U}_4$.

    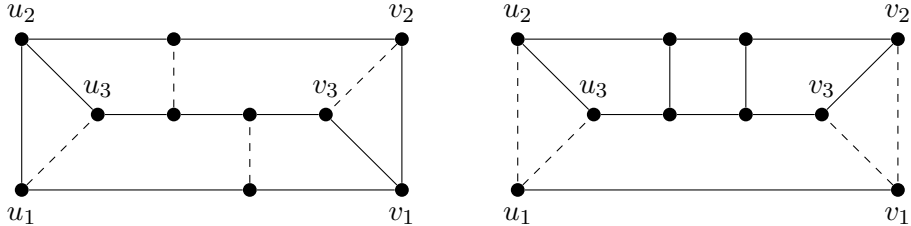
\begin{figure}[!htb]
    \centering
    \begin{tikzpicture}[scale=1, yscale=-1]
	\node[circle, fill, scale=0.5, label=$u_2$] (0) at (0, 0) {}; 
	\node[circle, fill, scale=0.5, label=below:$u_1$] (1) at (0, 2) {}; 
	\node[circle, fill, scale=0.5, label=$u_3$] (2) at (1, 1) {}; 
	\node[circle, fill, scale=0.5, label=$v_2$] (3) at (5, 0) {}; 
	\node[circle, fill, scale=0.5, label=below:$v_1$] (4) at (5, 2) {}; 
	\node[circle, fill, scale=0.5, label=$v_3$] (5) at (4, 1) {}; 
	\node[circle, fill, scale=0.5] (6) at (2.0, 1) {}; 
	\node[circle, fill, scale=0.5] (7) at (2.0, 0) {}; 
	\node[circle, fill, scale=0.5] (8) at (3.0, 2) {}; 
	\node[circle, fill, scale=0.5] (9) at (3.0, 1) {}; 

	\draw[black] (0) to (1);
	\draw[black] (0) to (2);
	\draw[black] (0) to (7);
	\draw[black,dashed] (1) to (2);
	\draw[black] (1) to (8);
	\draw[black] (2) to (6);
	\draw[black] (3) to (4);
	\draw[black,dashed] (3) to (5);
	\draw[black] (3) to (7);
	\draw[black] (4) to (5);
	\draw[black] (4) to (8);
	\draw[black] (5) to (9);
	\draw[black,dashed] (6) to (7);
	\draw[black] (6) to (9);
	\draw[black,dashed] (8) to (9);
\end{tikzpicture}\qquad
    \begin{tikzpicture}[scale=1, yscale=-1]
	\node[circle, fill, scale=0.5, label=$u_2$] (0) at (0, 0) {}; 
	\node[circle, fill, scale=0.5, label=below:$u_1$] (1) at (0, 2) {}; 
	\node[circle, fill, scale=0.5, label=$u_3$] (2) at (1, 1) {}; 
	\node[circle, fill, scale=0.5, label=$v_2$] (3) at (5, 0) {}; 
	\node[circle, fill, scale=0.5, label=below:$v_1$] (4) at (5, 2) {}; 
	\node[circle, fill, scale=0.5, label=$v_3$] (5) at (4, 1) {}; 
	\node[circle, fill, scale=0.5] (6) at (2.0, 1) {}; 
	\node[circle, fill, scale=0.5] (7) at (2.0, 0) {}; 
	\node[circle, fill, scale=0.5] (8) at (3.0, 1) {}; 
	\node[circle, fill, scale=0.5] (9) at (3.0, 0) {}; 

	\draw[black,dashed] (0) to (1);
	\draw[black] (0) to (2);
	\draw[black] (0) to (7);
	\draw[black,dashed] (1) to (2);
	\draw[black] (1) to (4);
	\draw[black] (2) to (6);
	\draw[black,dashed] (3) to (4);
	\draw[black] (3) to (5);
	\draw[black] (3) to (9);
	\draw[black,dashed] (4) to (5);
	\draw[black] (5) to (8);
	\draw[black] (6) to (7);
	\draw[black] (6) to (8);
	\draw[black] (7) to (9);
	\draw[black] (8) to (9);
\end{tikzpicture}
    \caption{The smallest graphs in $\mathcal{U}_4$. On the left we have the extended prism with extension pattern $(1,2)$, denoted by $Pr_1$ and on the right the extended prism with extension pattern $(1,1)$ denoted by $Pr_2$. }
    \label{fig:smallest_U_4}
\end{figure}
    
    We now prove that the extended prisms admitting extension pattern $(\underbrace{1,\dots,1}_{k \text{ times}})$, for all $k\ge3$, are the only other graphs in $\mathcal{U}_4$. We use induction on the number of vertices $n = 2k+6$ (where $k$ denotes the size of the extension pattern).



    
    For $k=3$,
    it follows by previous arguments that a graph on $12$ vertices in $\mathcal{U}_4$ must be a child of a graph in $\mathcal{U}_4$ on $10$ vertices, namely of either $Pr_1$ or $Pr_2$.
    By using Lemma~\ref{lem:blowUpToTriangle} it is easy to check that $Pr_1$ has no children in $\mathcal{U}_4$ (indeed, for every vertex $v$ of $Pr_1$ there is a join rooted in $v$ containing a lonely edge). Moreover, $Pr_2$ has  exactly one child in $\mathcal{U}_4$. Indeed, note that the lonely edges of $Pr_2$ are $u_1u_2$, $u_1u_3$, $v_1v_2$ and $v_1v_3$ (i.e.\ the edges of $T_u$ and $T_v$ incident with $u_1$ and $v_1$, respectively). Therefore, again by Lemma~\ref{lem:blowUpToTriangle}, its only child in $\mathcal{U}_4$ is the extended prism with extension pattern $(1,1,1)$. This graph still has as lonely edges $u_1u_2$, $u_1u_3$, $v_1v_2$ and $v_1v_3$.  
    
    We now show by induction the same holds for $k>3$. 
    By previous arguments a graph $H$ in $\mathcal{U}_4$ on $n\geq 12$ vertices must be a child of a graph in $\mathcal{U}_4$ on $n-2$ vertices. 
    By the inductive hypothesis, there is exactly one such graph. Namely, the extended prism $P'$ with extension pattern $(\underbrace{1,\dots,1}_{k-1 \text{ times}})$. Now, it is easy to check that the only vertex $v$ of $P'$ such that $(P')^v$ is in $\mathcal{U}_4$ is one of the two vertices adjacent with two lonely edges. Replacing this vertex with a triangle we obtain an extended prism with the desired pattern. Moreover, again the lonely edges of this new graph are $u_1u_2$, $u_1u_3$, $v_1v_2$ and $v_1v_3$.
\end{proof}

\subsection{Characterization of \texorpdfstring{$\mathcal{U}_3$}{U3}}

By using Lemma~\ref{lem:blowUpToTriangle}, a direct check shows that the tricorn graph belongs to $\mathcal{U}_3$ and has exactly three descendants in $\mathcal{U}_3$, see Figure~\ref{fig:tricorn_descendants}.

\begin{figure}[!htb]
    \centering
    \tikzstyle{vertex} = [fill, black, circle, scale=0.5]
\begin{tikzpicture}[rotate=0]
    \node[vertex] (0) at (0,0) {};
    \node[vertex] (c1) at (0,-1) {};
    \node[vertex] (c2) at (0.86602540378,-1.5) {};
    \node[vertex] (c3) at (-0.86602540378,-1.5) {};

    \coordinate (c1') at (0,-0.5);
    \coordinate (c2') at (0.86602540378,-1.5);
    \coordinate (c3') at (-0.86602540378,-1.5);
    \coordinate (c4') at (0.56602540378,-1);
    \coordinate (c5') at (-0.56602540378,-1);
    
    \draw (0) to (c1);
    \draw (c1) to (c2);
    \draw (c1) to (c3);
    \draw[dashed] (c2) to (c3);

    \path let \p1 = (c1), \p2 = (c2), \p3 = (c3) in
        {[rotate=240]   coordinate (l1) at (\p1)
                        coordinate (l2) at (\p2)
                        coordinate (l3) at (\p3)};

    \node[vertex] at (l1) {};
    \node[vertex] at (l2) {};
    \node[vertex] at (l3) {};

    \draw (0) to (l1);
    \draw (l1) to (l2);
    \draw (l1) to (l3);
    \draw[dashed] (l2) to (l3);

    \path let \p1 = (c1'), \p2 = (c2'), \p3 = (c3'), \p4 = (c4'), \p5 = (c5') in
        {[rotate=120]   coordinate (r1) at (\p1)
                        coordinate (r2) at (\p2)
                        coordinate (r3) at (\p3)
                        coordinate (r4) at (\p4)
                        coordinate (r5) at (\p5)};

    \node[vertex] at (r1) {};
    \node[vertex] at (r2) {};
    \node[vertex] at (r3) {};
    \node[vertex] at (r4) {};
    \node[vertex] at (r5) {};
    
    \draw (0) to (r1);
    \draw (r1) to (r4);
    \draw (r1) to (r5);
    \draw[dashed] (r2) to (r3);
    \draw (r2) to (r4);
    \draw (r3) to (r5);
    \draw (r4) to (r5);

    \draw (r2) to (l3);
    \draw (l2) to (c3);
    \draw (c2) to (r3);
\end{tikzpicture}\qquad
\begin{tikzpicture}
    \node[vertex] (0) at (0,0) {};
    \node[vertex] (c1) at (0,-1) {};
    \node[vertex] (c2) at (0.86602540378,-1.5) {};
    \node[vertex] (c3) at (-0.86602540378,-1.5) {};

    \coordinate (c1') at (0,-0.5);
    \coordinate (c2') at (0.86602540378,-1.5);
    \coordinate (c3') at (-0.86602540378,-1.5);
    \coordinate (c4') at (0.56602540378,-1);
    \coordinate (c5') at (-0.56602540378,-1);
    
    \draw (0) to (c1);
    \draw (c1) to (c2);
    \draw (c1) to (c3);
    \draw[dashed] (c2) to (c3);

    \path let \p1 = (c1'), \p2 = (c2'), \p3 = (c3'), \p4 = (c4'), \p5 = (c5') in
        {[rotate=240]   coordinate (l1) at (\p1)
                        coordinate (l2) at (\p2)
                        coordinate (l3) at (\p3)
                        coordinate (l4) at (\p4)
                        coordinate (l5) at (\p5)};

    \node[vertex] at (l1) {};
    \node[vertex] at (l2) {};
    \node[vertex] at (l3) {};
    \node[vertex] at (l4) {};
    \node[vertex] at (l5) {};
    
    \draw (0) to (l1);
    \draw (l1) to (l4);
    \draw (l1) to (l5);
    \draw[dashed] (l2) to (l3);
    \draw (l2) to (l4);
    \draw (l3) to (l5);
    \draw (l4) to (l5);

    \draw (r2) to (l3);
    \draw (l2) to (c3);
    \draw (c2) to (r3);

    \path let \p1 = (c1'), \p2 = (c2'), \p3 = (c3'), \p4 = (c4'), \p5 = (c5') in
        {[rotate=120]   coordinate (r1) at (\p1)
                        coordinate (r2) at (\p2)
                        coordinate (r3) at (\p3)
                        coordinate (r4) at (\p4)
                        coordinate (r5) at (\p5)};

    \node[vertex] at (r1) {};
    \node[vertex] at (r2) {};
    \node[vertex] at (r3) {};
    \node[vertex] at (r4) {};
    \node[vertex] at (r5) {};
    
    \draw (0) to (r1);
    \draw (r1) to (r4);
    \draw (r1) to (r5);
    \draw[dashed] (r2) to (r3);
    \draw (r2) to (r4);
    \draw (r3) to (r5);
    \draw (r4) to (r5);

    \draw (r2) to (l3);
    \draw (l2) to (c3);
    \draw (c2) to (r3);
\end{tikzpicture}\qquad
\begin{tikzpicture}
    \node[vertex] (0) at (0,0) {};

    \coordinate (c1') at (0,-0.5);
    \coordinate (c2') at (0.86602540378,-1.5);
    \coordinate (c3') at (-0.86602540378,-1.5);
    \coordinate (c4') at (0.56602540378,-1);
    \coordinate (c5') at (-0.56602540378,-1);
    
    \path let \p1 = (c1'), \p2 = (c2'), \p3 = (c3'), \p4 = (c4'), \p5 = (c5') in
        {[rotate=0]   coordinate (c1) at (\p1)
                        coordinate (c2) at (\p2)
                        coordinate (c3) at (\p3)
                        coordinate (c4) at (\p4)
                        coordinate (c5) at (\p5)};

    \node[vertex] at (c1) {};
    \node[vertex] at (c2) {};
    \node[vertex] at (c3) {};
    \node[vertex] at (c4) {};
    \node[vertex] at (c5) {};
    
    \draw (0) to (c1);
    \draw (c1) to (c4);
    \draw (c1) to (c5);
    \draw[dashed] (c2) to (c3);
    \draw (c2) to (c4);
    \draw (c3) to (c5);
    \draw (c4) to (c5);

    \path let \p1 = (c1'), \p2 = (c2'), \p3 = (c3'), \p4 = (c4'), \p5 = (c5') in
        {[rotate=240]   coordinate (l1) at (\p1)
                        coordinate (l2) at (\p2)
                        coordinate (l3) at (\p3)
                        coordinate (l4) at (\p4)
                        coordinate (l5) at (\p5)};

    \node[vertex] at (l1) {};
    \node[vertex] at (l2) {};
    \node[vertex] at (l3) {};
    \node[vertex] at (l4) {};
    \node[vertex] at (l5) {};
    
    \draw (0) to (l1);
    \draw (l1) to (l4);
    \draw (l1) to (l5);
    \draw[dashed] (l2) to (l3);
    \draw (l2) to (l4);
    \draw (l3) to (l5);
    \draw (l4) to (l5);

    \draw (r2) to (l3);
    \draw (l2) to (c3);
    \draw (c2) to (r3);

    \path let \p1 = (c1'), \p2 = (c2'), \p3 = (c3'), \p4 = (c4'), \p5 = (c5') in
        {[rotate=120]   coordinate (r1) at (\p1)
                        coordinate (r2) at (\p2)
                        coordinate (r3) at (\p3)
                        coordinate (r4) at (\p4)
                        coordinate (r5) at (\p5)};

    \node[vertex] at (r1) {};
    \node[vertex] at (r2) {};
    \node[vertex] at (r3) {};
    \node[vertex] at (r4) {};
    \node[vertex] at (r5) {};
    
    \draw (0) to (r1);
    \draw (r1) to (r4);
    \draw (r1) to (r5);
    \draw[dashed] (r2) to (r3);
    \draw (r2) to (r4);
    \draw (r3) to (r5);
    \draw (r4) to (r5);

    \draw (r2) to (l3);
    \draw (l2) to (c3);
    \draw (c2) to (r3);
\end{tikzpicture}
    \caption{All descendants in $\mathcal{U}_3$ of the tricorn. The lonely edges are dashed.}
    \label{fig:tricorn_descendants}
\end{figure}
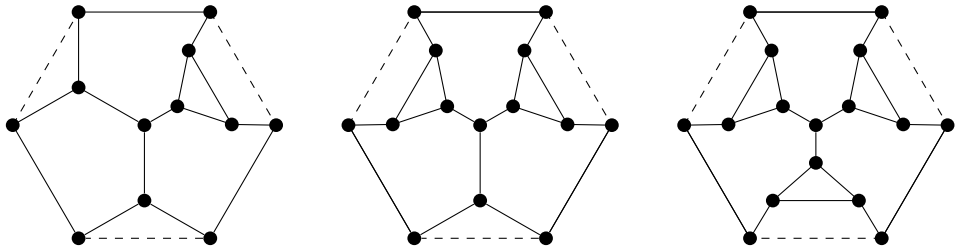

     


\begin{lemma}
\label{lem:3 triangles implie descendant of tricorn}
    If $G$ is a Klee-graph of order at least 10 with three triangles, then $G$ is the tricorn or a descendant of the tricorn.
\end{lemma}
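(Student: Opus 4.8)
The plan is to induct on $|V(G)|$, peeling off one triangle at a time, after first recording two elementary facts about triangles in Klee-graphs. First I would show that in any Klee-graph other than $K_4$ the triangles are pairwise vertex-disjoint: a short induction on the blow-up construction, using that the new triangle consists only of new vertices, that each new vertex sends a single edge to the old graph (so no triangle other than the new one is created), and that every triangle through the expanded vertex becomes a $4$-cycle. Second I would record the triangle-count formula $t(G^{v})=t(G)-d_G(v)+1$, where $d_G(v)$ is the number of triangles through $v$ and $t(\cdot)$ counts triangles; combined with disjointness this gives $d_G(v)\le 1$ whenever $G\neq K_4$, and $t(H)\ge 2$ for every Klee-graph $H$ (the minimum being attained at the prism). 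By Lemma~\ref{lem:G^v KG then G KG}, contracting any triangle of a Klee-graph $\neq K_4$ again yields a Klee-graph, so all these moves stay inside $\mathcal{K}$.

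Let $T_1,T_2,T_3$ be the three triangles of $G$, which are pairwise vertex-disjoint by the above. I split on whether some $T_i$ has two of its three external neighbours adjacent. If it does (call it a \emph{good} triangle), then $H=G/T_i$ is a Klee-graph with $|V(H)|=|V(G)|-2\ge 10$, and since $H\neq K_4$ forces $d_H(w)=1$ for the contracted vertex $w$, the count formula gives $t(H)=3$. By the induction hypothesis $H$ is the tricorn or a descendant of it, whence $G=H^{w}$ is a descendant of the tricorn. So the whole difficulty is to guarantee a good triangle once $|V(G)|\ge 12$.

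The main obstacle is therefore the \emph{bad case}, where every $T_i$ has pairwise non-adjacent external neighbours, and I would show it forces $G$ to be the tricorn on ten vertices. Here I contract all three (disjoint) triangles at once to get a Klee-graph $G^{\ast}$, so that $G$ is recovered by expanding three vertices $a,b,c$ of $G^{\ast}$. The crucial remark is that if a triangle $R$ of $G^{\ast}$ contains exactly one of $a,b,c$, say $a$, then the other two vertices of $R$ are adjacent external neighbours of the triangle of $G$ coming from $a$ (the edge between them survives all three expansions, as its endpoints are not expanded), contradicting the bad case. Hence every triangle of $G^{\ast}$ meeting $\{a,b,c\}$ contains at least two of them; since the triangles of $G^{\ast}$ are disjoint and $|\{a,b,c\}|=3$, at most one triangle of $G^{\ast}$ is destroyed by the three expansions. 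Then $t(G)=t(G^{\ast})+3-(\text{destroyed})\ge t(G^{\ast})+2\ge 4$ contradicts $t(G)=3$ — unless $G^{\ast}=K_4$, where the triangles overlap and the bound on the number destroyed fails. Thus $G^{\ast}=K_4$, so $|V(G)|=10$ and $G$ is the expansion of three of the four vertices of $K_4$, which is exactly the tricorn.

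The base case $|V(G)|=10$ drops out of the same device: contracting the three disjoint triangles leaves a simple cubic graph on four vertices, necessarily $K_4$, so $G$ is again the expansion of three vertices of $K_4$, i.e.\ the tricorn. The step I expect to require the most care is the bad-case bookkeeping: one must check that expanding a vertex whose $G^{\ast}$-neighbour is also being expanded does not create an unintended adjacency between external neighbours, and that the simultaneous contraction produces no spurious triangles. Both points reduce cleanly to triangle-disjointness and to the fact that each expanded vertex contributes exactly one edge to each of its old neighbours, so I expect the argument to close without heavy computation.
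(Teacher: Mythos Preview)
Your argument is sound under the reading that $G$ has \emph{exactly} three triangles; the bad-case counting is a clean device. There is, however, a small gap relative to how the lemma is used later (and how the paper proves it): the intended hypothesis is ``at least three triangles'', and your contradiction $t(G)\ge 4$ in the bad case is vacuous when $t(G)\ge 4$ to begin with. The fix is immediate with your own tools: if $t(G)\ge 4$ then for \emph{any} triangle $T$ the formula gives $t(G/T)\ge t(G)-1\ge 3$, so one may induct on $G/T$ without ever invoking the good/bad dichotomy. Only the borderline case $t(G)=3$ needs your bad-case analysis.

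Your route is genuinely different from the paper's, which avoids the case split entirely. The paper contracts all three (disjoint) triangles to a Klee-graph $G'$, observes that $G'$ has two vertex-disjoint triangles (via Observation~\ref{obs:trNotContainingVertex}), and by pigeonhole selects one, say $T_1'$, that misses at least two of the contracted vertices --- without loss of generality $x_2$ and $x_3$. Re-expanding $x_2,x_3$ yields $G''\cong G/T_1$, a smaller Klee-graph still containing the three triangles $T_2,T_3,T_1'$, and the induction closes. Where you \emph{count} triangles after contraction, the paper simply \emph{exhibits} a surviving third triangle; this sidesteps both your good/bad split and the ``exactly vs.\ at least three'' issue in one stroke. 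Your approach, by contrast, yields the pleasant structural by-product that the only Klee-graph of order at least ten with exactly three triangles, all of whose external neighbours are pairwise non-adjacent, is the tricorn itself.
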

\begin{proof}
    We argue by induction on $|V(G)|$. For $|V(G)|=10$ the statement is easy to check. Thus, assume that $G$ has at least 12 vertices. Now, we show that we can always find a triangle whose contraction gives a smaller Klee-graph with three triangles, and then we can apply the inductive assumption. 
    Note that every two triangles of a Klee-graph that is not $K_4$ are vertex-disjoint, since every Klee-graph is 3-edge-connected. Let $T_1,T_2,T_3$ be three triangles of $G$. For every $i \in \{1,2,3\}$ contract $T_i$ to a new vertex $x_i$ and let $G'$ be the resulting graph of order $|V(G)|-6$. By Lemma~\ref{lem:G^v KG then G KG}, $G'$ is a Klee-graph, which has two disjoint triangles $T_1', T_2'$ by Observation~\ref{obs:trNotContainingVertex}. Clearly, one of these triangles contains at most one vertex of $\{x_1,x_2,x_3\}$. Without loss of generality we assume that $T_1'$ does not contain $x_2,x_3$. Let $G''$ be the graph obtained from $G'$ by replacing $x_2$ and $x_3$ by a triangle. Note that $G''$ is isomorphic to the graph obtained from $G$ by contracting $T_1$ to a new vertex. The graph $G''$ is a Klee-graph by Lemma~\ref{lem:G^v KG then G KG} and has three triangles, namely $T_1'$ and the two new triangles obtained by replacing $x_2,x_3$. Therefore, $G''$ is the tricorn or a descendant of the tricorn by inductive hypothesis and hence, $G$ is a descendant of the tricorn.
\end{proof}

Let ${\cal W}_3$ be the subset of ${\cal U}_3$ consisting of the following graphs: the tricorn graph and its descendants in $\mathcal{U}_3$, and the extended prism with extension pattern $(1,2,1)$. See Figure~\ref{fig:313}. 

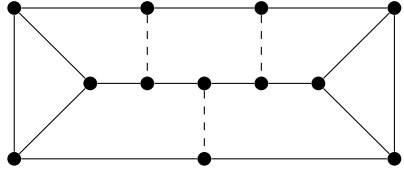
\begin{figure}[!htb]
    \centering
    \begin{tikzpicture}[scale=1]
	\node[circle, fill, scale=0.5] (0) at (0, 0) {}; 
	\node[circle, fill, scale=0.5] (1) at (0, 2) {}; 
	\node[circle, fill, scale=0.5] (2) at (1, 1) {}; 
	\node[circle, fill, scale=0.5] (3) at (5, 0) {}; 
	\node[circle, fill, scale=0.5] (4) at (5, 2) {}; 
	\node[circle, fill, scale=0.5] (5) at (4, 1) {}; 
	\node[circle, fill, scale=0.5] (6) at (1.75, 2) {}; 
	\node[circle, fill, scale=0.5] (7) at (1.75, 1) {}; 
	\node[circle, fill, scale=0.5] (8) at (2.5, 1) {}; 
	\node[circle, fill, scale=0.5] (9) at (2.5, 0) {}; 
	\node[circle, fill, scale=0.5] (10) at (3.25, 2) {}; 
	\node[circle, fill, scale=0.5] (11) at (3.25, 1) {}; 

	\draw[black] (0) to (1);
	\draw[black] (0) to (2);
	\draw[black] (0) to (9);
	\draw[black] (1) to (2);
	\draw[black] (1) to (6);
	\draw[black] (2) to (7);
	\draw[black] (3) to (4);
	\draw[black] (3) to (5);
	\draw[black] (3) to (9);
	\draw[black] (4) to (5);
	\draw[black] (4) to (10);
	\draw[black] (5) to (11);
	\draw[black, dashed] (6) to (7);
	\draw[black] (6) to (10);
	\draw[black] (7) to (8);
	\draw[black, dashed] (8) to (9);
	\draw[black] (8) to (11);
	\draw[black, dashed] (10) to (11);
\end{tikzpicture}
    \caption{An extended prism with extension pattern $(1,2,1)$. Its lonely edges are dashed.}
    \label{fig:313}
\end{figure}

\begin{theorem}\label{thm:Characterization_U3}
A graph $G$ belongs to $\mathcal{U}_3$ if and only if  either $G\in {\cal W}_3$ or $G$ is an extended prism which admits extension pattern $( \underbrace{1,\dots,1}_{h-1 \text{ times}}, 3, \underbrace{2,\dots,2}_{k-h \text{ times}})$, where $k\geq 3$ and $\frac{k+1}{2}\leq h\leq k$.

\end{theorem}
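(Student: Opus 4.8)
The plan is to prove both implications, organizing the whole argument as an induction on $|V(G)|$ and exploiting the fact that, by Proposition~\ref{prop:inductiveDefinition}, every $G\in\mathcal{U}_3$ of order larger than that of the tricorn is a child $H^v$ of some $H\in\mathcal{U}$ with $l(H)\ge 3$ and $p_v=l(H)-3$; since $|V(H)|\ge 10$ exceeds the maximum order of the (finite) classes $\mathcal{U}_5\cup\mathcal{U}_6$, one always has $H\in\mathcal{U}_3\cup\mathcal{U}_4$. The two directions then share the same combinatorial engine, namely Lemma~\ref{lem:blowUpToTriangle} and Corollary~\ref{cor:formula special edges}, which track how the lonely edges and their count change under a single vertex-to-triangle expansion.

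For the membership (``if'') direction, note that $\mathcal{W}_3\subseteq\mathcal{U}_3$ holds essentially by definition, so only the extended prism with pattern $(3,1,3)$ needs a direct check via Lemma~\ref{lem:blowUpToTriangle}, confirming that it has exactly the three interior lonely edges shown in Figure~\ref{fig:313}. For the infinite family I would argue by induction on $k$: starting from a smallest member and applying the next $T_v$-extension of the prescribed index, I would exhibit the three lonely edges explicitly and verify that the expanded vertex satisfies $p_v=0$, so that Corollary~\ref{cor:formula special edges} keeps $l=3$ and Proposition~\ref{prop:inductiveDefinition} keeps the graph in $\mathcal{U}$; both an appended $3$ at the front and an appended $1$ at the back must be checked to be $p_v=0$ moves in order to reach all admissible pairs $(h,k)$.

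For the completeness (``only if'') direction, let $G\in\mathcal{U}_3$; then $G$ is a Klee-graph of order at least $10$, and every two of its triangles are vertex-disjoint. I would split on the number of triangles. If $G$ has at least three triangles, Lemma~\ref{lem:3 triangles implie descendant of tricorn} forces $G$ to be the tricorn or one of its descendants; combined with the already-established fact that the tricorn has exactly three descendants in $\mathcal{U}_3$ (Figure~\ref{fig:tricorn_descendants}), this gives $G\in\mathcal{W}_3$. If $G$ has exactly two triangles, I would first prove that $G$ is an extended prism and then determine its admissible extension patterns. Equivalently, in the inductive formulation $G=H^v$, distinguishing whether $v$ lies on a triangle of $H$ (so the number of triangles is preserved and $G$ stays an extended prism or tricorn-descendant) or off the triangles (so a third triangle is created) produces exactly this dichotomy, and lets the induction feed on the characterization of $\mathcal{U}_4$ in Theorem~\ref{thm:Characterization_U4} and on the three tricorn-descendants.

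The hard part will be the two-triangle case. First, showing that a Klee-graph with exactly two triangles must be an extended prism is a structural point that I would prove by contracting its newest triangle and controlling the triangles possibly created by the contraction, using Lemma~\ref{lem:G^v KG then G KG} and Observation~\ref{obs:trNotContainingVertex}; the delicate issue is that expanding a vertex off the two triangles would create a third triangle, which must be excluded in this case. Second, pinning down exactly which extension patterns yield $l(G)=3$ is the real bookkeeping obstacle: I would track, through each index $1$, $2$, $3$ of a $T_v$-extension, the lonely status of the three current $T_v$-edges and their rungs, together with the lonely edges frozen in the remainder of the ladder, as dictated by Lemma~\ref{lem:blowUpToTriangle}. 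The subtlety is that the relevant $v$-joins need not be unique and may reach across the ladder (as already happens for the bicorn), so a clean invariant on this local profile is needed; carrying it through the expansions should show that, apart from the sporadic pattern $(3,1,3)$, the only way to maintain exactly three lonely edges is the pattern $(\underbrace{3,\dots,3}_{h-1},2,\underbrace{1,\dots,1}_{k-h})$.
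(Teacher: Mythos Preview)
Your high-level skeleton matches the paper: split on the number of triangles, use Lemma~\ref{lem:3 triangles implie descendant of tricorn} together with the finite list of tricorn descendants to dispose of the three-triangle case, and reduce the two-triangle case to extended prisms. Your ``if'' direction is also the paper's: an induction showing the three specific edges stay lonely while the relevant expansion vertex has $p_v=0$. You are in fact more careful than the paper on one point: the implication ``Klee-graph with exactly two triangles $\Rightarrow$ extended prism'' is asserted in the paper without argument, and your plan to prove it (via commuting $T_u$- and $T_v$-expansions) is the right one.

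The substantive divergence is in how you narrow down the admissible extension patterns. You propose a forward induction, writing $G=H^v$ with $H\in\mathcal{U}_3\cup\mathcal{U}_4$ and tracking a ``local profile'' of lonely edges and $v$-joins through each extension. The paper does not do this. Instead it exploits Observation~\ref{obs:subsequence}: once a \emph{single} extended prism with at most two lonely edges is exhibited, every extended prism whose pattern contains that pattern (or a relabeling) as a consecutive subsequence is automatically in $\mathcal{U}_{\le 2}$. The paper then lists a short finite collection of such ``bad'' subsequences (Figures~\ref{fig:bad_patterns} and~\ref{fig:bad_patterns_2}), each certified by one explicit $v$-join drawn in the figure, and finishes with a brief case analysis on the number of distinct symbols $|\operatorname{set}(Y)|$ in the pattern to see that every pattern not of the claimed shape contains one of the bad subsequences. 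The small base cases ($Pr_3$, $Pr_4$, and the sporadic $(3,1,3)$) are handled by a direct computer check.

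Your approach is not wrong in principle, but the crucial step is left as a hope: you say ``a clean invariant on this local profile is needed'' without producing it. Finding such an invariant is exactly the hard part, because the $v$-joins that kill lonely edges are not local to $T_v$ --- they stretch across the entire ladder and depend on parity (this is why the paper needs separate even/odd figures). Moreover, your forward induction must, for every $H$ in the inductive list and every vertex $v$ on either end-triangle, compute $p_v$ exactly (not just decide $p_v=0$ or $p_v>0$), which amounts to reproducing the content of Figures~\ref{fig:bad_patterns}--\ref{fig:bad_patterns_2} anyway, only organized less efficiently. The forbidden-subsequence device is what lets the paper compress this into seven pictures and a half-page case split; without it, your proof would be correct but substantially longer and more fragile.
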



\begin{proof}
First, we prove that if $G \in \mathcal{U}_3$ then it is either in $\mathcal{W}_3$ or it is an extended prism with extension pattern as described in the statement.

Let $G \in \mathcal{U}_3 \setminus {\cal W}_3$. Recall that the tricorn only has three descendants in $\mathcal{U}_3$, all of them belong to ${\cal W}_3$. Hence by Lemma~\ref{lem:3 triangles implie descendant of tricorn}, $G$ has at most two triangles. As a consequence, $G$ is an extended prism, and we need to prove that the only possible extension patterns are the ones in the statement.

A direct check shows that the extended prisms with extension patterns $(1,1,3)$ and $(1,3,2)$ are the smallest graphs in $\mathcal{U}_3 \setminus \mathcal{W}_3$, which we call $Pr_3$ and $Pr_4$, respectively. So, we can assume that $G$ has more than $12$ vertices.


    \newcounter{Number_of_patterns_in_first_figure}

    \begin{figure}[!htb]
    \def\Arrow{\raisebox{6\height}{$\longrightarrow$\,\,}}
    \setlength{\abovecaptionskip}{-3pt} 
        \begin{enumerate}
        \setlength{\itemsep}{-18pt}

        

        \item \parbox{\linewidth}{
            \begin{tikzpicture}[scale=1]
	\node[circle, fill, scale=0.5] (0) at (0, 0) {}; 
	\node[circle, fill, scale=0.5] (1) at (0, 2) {}; 
	\node[circle, fill, scale=0.5] (2) at (1, 1) {}; 
	\node[circle, fill, scale=0.5] (3) at (5, 0) {}; 
	\node[circle, fill, scale=0.5] (4) at (5, 2) {}; 
	\node[circle, fill, scale=0.5] (5) at (4, 1) {}; 
	\node[circle, fill, scale=0.5] (6) at (1.75, 1) {}; 
	\node[circle, fill, scale=0.5] (7) at (1.75, 0) {}; 
	\node[circle, fill, scale=0.5] (8) at (2.5, 1) {}; 
	\node[circle, fill, scale=0.5] (9) at (2.5, 0) {}; 
	\node[circle, fill, scale=0.5] (10) at (3.25, 2) {}; 
	\node[circle, fill, scale=0.5] (11) at (3.25, 1) {}; 

    \draw[white] (1) circle[radius=0.2];
    \draw[white] (3) circle[radius=0.2];

	\draw[black, dashed] (0) to (1);
	\draw[black] (0) to (2);
	\draw[black, line width=\w] (0.center) to (7.center);
	\draw[black, line width=\w] (1.center) to (2.center);
	\draw[black] (1) to (10);
	\draw[black] (2) to (6);
	\draw[black, line width=\w] (3) to (4);
	\draw[black, line width=\w, dashed] (3.center) to (5.center);
	\draw[black, line width=\w] (3) to (9);
	\draw[black] (4) to (5);
	\draw[black] (4) to (10);
	\draw[black] (5) to (11);
	\draw[black] (6) to (7);
	\draw[black, line width=\w] (6.center) to (8.center);
	\draw[black] (7) to (9);
	\draw[black] (8) to (9);
	\draw[black] (8) to (11);
	\draw[black, line width=\w, dashed] (10.center) to (11.center);

     \draw[thick] (3) circle[radius=0.2];

\end{tikzpicture}
            \Arrow
            \begin{tikzpicture}[scale=1]
	\node[circle, fill, scale=0.5] (0) at (0, 0) {}; 
	\node[circle, fill, scale=0.5] (1) at (0, 2) {}; 
	\node[circle, fill, scale=0.5] (2) at (1, 1) {}; 
	\node[circle, fill, scale=0.5] (3) at (5, 0) {}; 
	\node[circle, fill, scale=0.5] (4) at (5, 2) {}; 
	\node[circle, fill, scale=0.5] (5) at (4, 1) {}; 
	\node[circle, fill, scale=0.5] (6) at (1.6, 1) {}; 
	\node[circle, fill, scale=0.5] (7) at (1.6, 0) {}; 
	\node[circle, fill, scale=0.5] (8) at (2.2, 1) {}; 
	\node[circle, fill, scale=0.5] (9) at (2.2, 0) {}; 
	\node[circle, fill, scale=0.5] (10) at (2.8, 2) {}; 
	\node[circle, fill, scale=0.5] (11) at (2.8, 1) {}; 
	\node[circle, fill, scale=0.5] (12) at (3.4, 2) {}; 
	\node[circle, fill, scale=0.5] (13) at (3.4, 1) {}; 

    \draw[white] (1) circle[radius=0.2];
    \draw[white] (3) circle[radius=0.2];

	\draw[black, dashed] (0) to (1);
	\draw[black] (0) to (2);
	\draw[black] (0) to (7);
	\draw[black] (1) to (2);
	\draw[black] (1) to (10);
	\draw[black] (2) to (6);
	\draw[black, dashed] (3) to (4);
	\draw[black] (3) to (5);
	\draw[black] (3) to (9);
	\draw[black] (4) to (5);
	\draw[black] (4) to (12);
	\draw[black] (5) to (13);
	\draw[black] (6) to (7);
	\draw[black] (6) to (8);
	\draw[black] (7) to (9);
	\draw[black] (8) to (9);
	\draw[black] (8) to (11);
	\draw[black] (10) to (11);
	\draw[black] (10) to (12);
	\draw[black] (11) to (13);
	\draw[black] (12) to (13);
\end{tikzpicture}
        }
        \label{pat:1133}



        \item \parbox{\linewidth}{
            \begin{tikzpicture}[scale=1]
	\node[circle, fill, scale=0.5] (0) at (0, 0) {}; 
	\node[circle, fill, scale=0.5] (1) at (0, 2) {}; 
	\node[circle, fill, scale=0.5] (2) at (1, 1) {}; 
	\node[circle, fill, scale=0.5] (3) at (5, 0) {}; 
	\node[circle, fill, scale=0.5] (4) at (5, 2) {}; 
	\node[circle, fill, scale=0.5] (5) at (4, 1) {}; 
	\node[circle, fill, scale=0.5] (6) at (1.75, 1) {}; 
	\node[circle, fill, scale=0.5] (7) at (1.75, 0) {}; 
	\node[circle, fill, scale=0.5] (8) at (2.5, 2) {}; 
	\node[circle, fill, scale=0.5] (9) at (2.5, 1) {}; 
	\node[circle, fill, scale=0.5] (10) at (3.25, 2) {}; 
	\node[circle, fill, scale=0.5] (11) at (3.25, 0) {}; 

    \draw[white] (1) circle[radius=0.2];
    \draw[white] (3) circle[radius=0.2];

	\draw[black] (0) to (1);
	\draw[black, line width=\w] (0.center) to (2.center);
	\draw[black] (0) to (7);
	\draw[black, line width=\w, dashed] (1.center) to (2.center);
	\draw[black] (1) to (8);
	\draw[black, line width=\w] (2.center) to (6.center);
	\draw[black] (3) to (4);
	\draw[black, line width=\w] (3.center) to (5.center);
	\draw[black] (3) to (11);
	\draw[black, dashed] (4) to (5);
	\draw[black, line width=\w] (4.center) to (10.center);
	\draw[black] (5) to (9);
	\draw[black] (6) to (7);
	\draw[black] (6) to (9);
	\draw[black, line width=\w] (7.center) to (11.center);
	\draw[black, line width=\w, dashed] (8.center) to (9.center);
	\draw[black] (8) to (10);
	\draw[black] (10) to (11);

     \draw[thick] (2) circle[radius=0.2];
     

\end{tikzpicture}
            \Arrow
            \begin{tikzpicture}[scale=1]
	\node[circle, fill, scale=0.5] (0) at (0, 0) {}; 
	\node[circle, fill, scale=0.5] (1) at (0, 2) {}; 
	\node[circle, fill, scale=0.5] (2) at (1, 1) {}; 
	\node[circle, fill, scale=0.5] (3) at (5, 0) {}; 
	\node[circle, fill, scale=0.5] (4) at (5, 2) {}; 
	\node[circle, fill, scale=0.5] (5) at (4, 1) {}; 
	\node[circle, fill, scale=0.5] (6) at (1.6, 2) {}; 
	\node[circle, fill, scale=0.5] (7) at (1.6, 0) {}; 
	\node[circle, fill, scale=0.5] (8) at (2.2, 1) {}; 
	\node[circle, fill, scale=0.5] (9) at (2.2, 0) {}; 
	\node[circle, fill, scale=0.5] (10) at (2.8, 2) {}; 
	\node[circle, fill, scale=0.5] (11) at (2.8, 1) {}; 
	\node[circle, fill, scale=0.5] (12) at (3.4, 2) {}; 
	\node[circle, fill, scale=0.5] (13) at (3.4, 0) {}; 

    \draw[white] (1) circle[radius=0.2];
    \draw[white] (3) circle[radius=0.2];

	\draw[black] (0) to (1);
	\draw[black, dashed] (0) to (2);
	\draw[black] (0) to (7);
	\draw[black] (1) to (2);
	\draw[black] (1) to (6);
	\draw[black] (2) to (8);
	\draw[black] (3) to (4);
	\draw[black] (3) to (5);
	\draw[black] (3) to (13);
	\draw[black, dashed] (4) to (5);
	\draw[black] (4) to (12);
	\draw[black] (5) to (11);
	\draw[black] (6) to (7);
	\draw[black] (6) to (10);
	\draw[black] (7) to (9);
	\draw[black] (8) to (9);
	\draw[black] (8) to (11);
	\draw[black] (9) to (13);
	\draw[black] (10) to (11);
	\draw[black] (10) to (12);
	\draw[black] (12) to (13);
\end{tikzpicture}
        }
        \label{pat:2132}
        
        \item \parbox{\linewidth}{
            \begin{tikzpicture}[scale=1]
	\node[circle, fill, scale=0.5] (0) at (0, 0) {}; 
	\node[circle, fill, scale=0.5] (1) at (0, 2) {}; 
	\node[circle, fill, scale=0.5] (2) at (1, 1) {}; 
	\node[circle, fill, scale=0.5] (3) at (5, 0) {}; 
	\node[circle, fill, scale=0.5] (4) at (5, 2) {}; 
	\node[circle, fill, scale=0.5] (5) at (4, 1) {}; 
	\node[circle, fill, scale=0.5] (6) at (1.75, 1) {}; 
	\node[circle, fill, scale=0.5] (7) at (1.75, 0) {}; 
	\node[circle, fill, scale=0.5] (8) at (2.5, 2) {}; 
	\node[circle, fill, scale=0.5] (9) at (2.5, 1) {}; 
	\node[circle, fill, scale=0.5] (10) at (3.25, 1) {}; 
	\node[circle, fill, scale=0.5] (11) at (3.25, 0) {};

    \draw[white] (1) circle[radius=0.2];
    \draw[white] (3) circle[radius=0.2];

	\draw[black] (0) to (1);
	\draw[black] (0) to (2);
	\draw[black] (0) to (7);
	\draw[black] (1) to (2);
	\draw[black] (1) to (8);
	\draw[black] (2) to (6);
	\draw[black] (3) to (4);
	\draw[black] (3) to (5);
	\draw[black] (3) to (11);
	\draw[black] (4) to (5);
	\draw[black] (4) to (8);
	\draw[black] (5) to (10);
	\draw[black, dashed] (6) to (7);
	\draw[black] (6) to (9);
	\draw[black] (7) to (11);
	\draw[black, dashed] (8) to (9);
	\draw[black] (9) to (10);
	\draw[black, dashed] (10) to (11);

    \draw[thick] (0) circle[radius=0.2];
    \draw[thick] (1) circle[radius=0.2];
    \draw[thick] (2) circle[radius=0.2];
    \draw[thick] (3) circle[radius=0.2];
    \draw[thick] (4) circle[radius=0.2];
    \draw[thick] (5) circle[radius=0.2];


\end{tikzpicture}
            \qquad
            \raisebox{2\height}{\parbox{0.4\linewidth}{Expanding any encircled vertex yields a graph in $\mathcal{U}_2$.}}
        }
        \label{pat:..131..}
        
        \setcounter{Number_of_patterns_in_first_figure}{\value{enumi}}
        \end{enumerate}
        \caption{On the right-hand side of the picture we show extended prisms with at most two lonely edges (dashed). They can be obtained by replacing the encircled vertex $v$ of the graph on the left-hand side with a triangle, where the thick edges represent joins rooted in $v$. The third point produces multiple bad patterns, i.e.\ for all $t\in\{1,2,3\}$ the extended prism with pattern $(2,1,2,t)$ has at most two lonely edges.}
        \label{fig:bad_patterns}
    \end{figure}
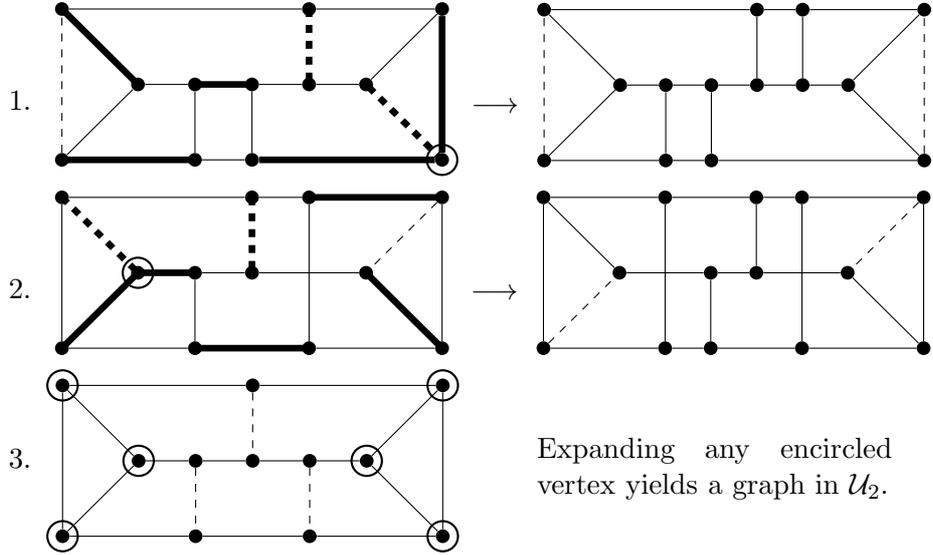
    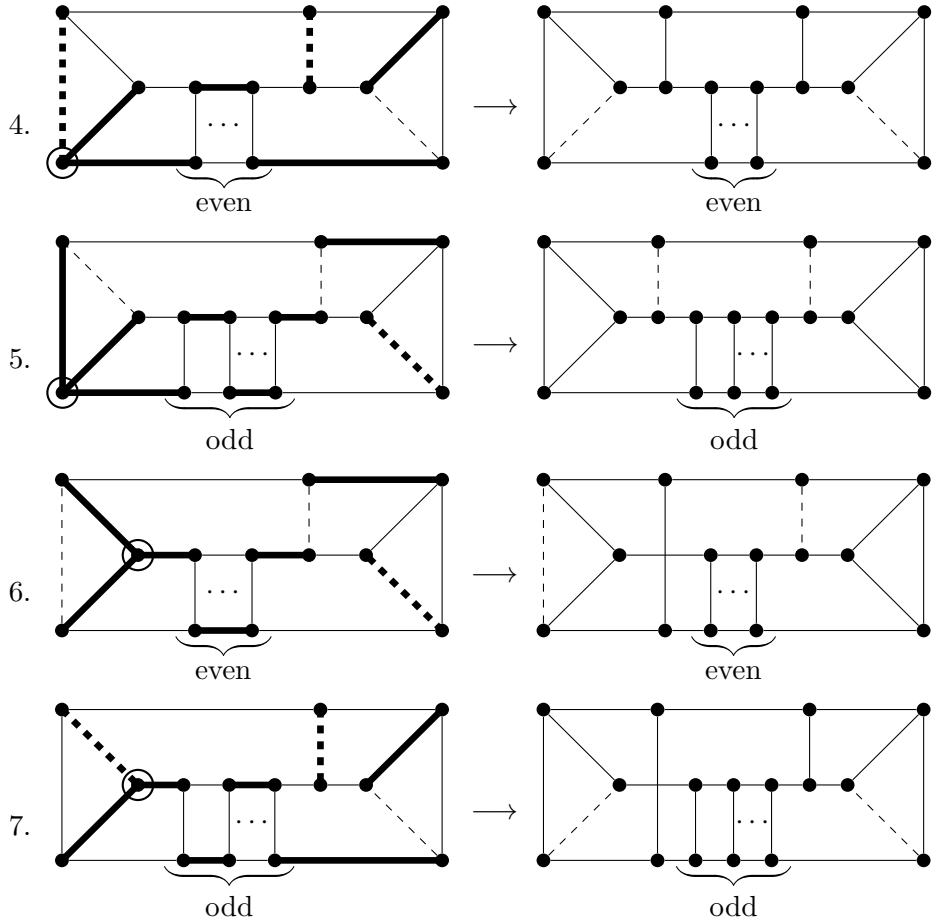
\begin{figure}[!htb]
        \def\Arrow{\raisebox{10\height}{$\longrightarrow$\,\,}}
        \setlength{\abovecaptionskip}{-18pt} 
        \begin{enumerate}
            \setcounter{enumi}{\value{Number_of_patterns_in_first_figure}}
            \setlength{\itemsep}{-18pt}
    
            \item \parbox{\linewidth}{
                \begin{tikzpicture}[scale=1, brace/.style={thick,decorate, decoration={calligraphic brace, amplitude=7pt,raise=0.5ex}}]
	\node[circle, fill, scale=0.5] (0) at (0, 0) {}; 
	\node[circle, fill, scale=0.5] (1) at (0, 2) {}; 
	\node[circle, fill, scale=0.5] (2) at (1, 1) {}; 
	\node[circle, fill, scale=0.5] (3) at (5, 0) {}; 
	\node[circle, fill, scale=0.5] (4) at (5, 2) {}; 
	\node[circle, fill, scale=0.5] (5) at (4, 1) {}; 
	\node[circle, fill, scale=0.5] (6) at (1.75, 1) {}; 
	\node[circle, fill, scale=0.5] (7) at (1.75, 0) {}; 
	\node[circle, fill, scale=0.5] (8) at (2.5, 1) {}; 
	\node[circle, fill, scale=0.5] (9) at (2.5, 0) {}; 
	\node[circle, fill, scale=0.5] (10) at (3.25, 2) {}; 
	\node[circle, fill, scale=0.5] (11) at (3.25, 1) {}; 
    \node at (2.15,0.5) {$\cdots$};

    \draw[white] (1) circle[radius=0.2];
    \draw[white] (3) circle[radius=0.2];

	\draw[black, line width=\w, dashed] (0.center) to (1.center);
	\draw[black, line width=\w] (0.center) to (2.center);
	\draw[black, line width=\w] (0.center) to (7.center);
	\draw[black] (1) to (2);
	\draw[black] (1) to (10);
	\draw[black] (2) to (6);
	\draw[black] (3) to (4);
	\draw[black, dashed] (3) to (5);
	\draw[black, line width=\w] (3.center) to (9.center);
	\draw[black, line width=\w] (4.center) to (5.center);
	\draw[black] (4) to (10);
	\draw[black] (5) to (11);
	\draw[black] (6) to (7);
	\draw[black, line width=\w] (6.center) to (8.center);
	\draw[black] (7) to (9);
	\draw[black] (8) to (9);
	\draw[black] (8) to (11);
	\draw[black, line width=\w, dashed] (10.center) to (11.center);

     \draw[thick] (0) circle[radius=0.2];
     \draw[brace] ($ (9) + (.25,0) $)-- node[below=2ex]{even} ($ (7) + (-.25,0) $);


\end{tikzpicture}
                \Arrow
                \begin{tikzpicture}[scale=1, brace/.style={thick,decorate, decoration={calligraphic brace, amplitude=7pt,raise=0.5ex}}]
	\node[circle, fill, scale=0.5] (0) at (0, 0) {}; 
	\node[circle, fill, scale=0.5] (1) at (0, 2) {}; 
	\node[circle, fill, scale=0.5] (2) at (1, 1) {}; 
	\node[circle, fill, scale=0.5] (3) at (5, 0) {}; 
	\node[circle, fill, scale=0.5] (4) at (5, 2) {}; 
	\node[circle, fill, scale=0.5] (5) at (4, 1) {}; 
	\node[circle, fill, scale=0.5] (6) at (1.6, 2) {}; 
	\node[circle, fill, scale=0.5] (7) at (1.6, 1) {}; 
	\node[circle, fill, scale=0.5] (8) at (2.2, 1) {}; 
	\node[circle, fill, scale=0.5] (9) at (2.2, 0) {}; 
	\node[circle, fill, scale=0.5] (10) at (2.8, 1) {}; 
	\node[circle, fill, scale=0.5] (11) at (2.8, 0) {}; 
	\node[circle, fill, scale=0.5] (12) at (3.4, 2) {}; 
	\node[circle, fill, scale=0.5] (13) at (3.4, 1) {}; 

    \node at (2.53,0.5) {$\cdots$};

    \draw[white] (1) circle[radius=0.2];
    \draw[white] (3) circle[radius=0.2];

	\draw[black] (0) to (1);
	\draw[black, dashed] (0) to (2);
	\draw[black] (0) to (9);
	\draw[black] (1) to (2);
	\draw[black] (1) to (6);
	\draw[black] (2) to (7);
	\draw[black] (3) to (4);
	\draw[black, dashed] (3) to (5);
	\draw[black] (3) to (11);
	\draw[black] (4) to (5);
	\draw[black] (4) to (12);
	\draw[black] (5) to (13);
	\draw[black] (6) to (7);
	\draw[black] (6) to (12);
	\draw[black] (7) to (8);
	\draw[black] (8) to (9);
	\draw[black] (8) to (10);
	\draw[black] (9) to (11);
	\draw[black] (10) to (11);
	\draw[black] (10) to (13);
	\draw[black] (12) to (13);

    \draw[brace] ($ (11) + (.25,0) $)-- node[below=2ex]{even} ($ (9) + (-.25,0) $);

\end{tikzpicture}
            }
            \label{pat:31..13_even}
            
            \item \parbox{\linewidth}{
                \begin{tikzpicture}[scale=1, brace/.style={thick,decorate, decoration={calligraphic brace, amplitude=7pt,raise=0.5ex}}]
	\node[circle, fill, scale=0.5] (0) at (0, 0) {}; 
	\node[circle, fill, scale=0.5] (1) at (0, 2) {}; 
	\node[circle, fill, scale=0.5] (2) at (1, 1) {}; 
	\node[circle, fill, scale=0.5] (3) at (5, 0) {}; 
	\node[circle, fill, scale=0.5] (4) at (5, 2) {}; 
	\node[circle, fill, scale=0.5] (5) at (4, 1) {}; 
	\node[circle, fill, scale=0.5] (6) at (1.6, 1) {}; 
	\node[circle, fill, scale=0.5] (7) at (1.6, 0) {}; 
	\node[circle, fill, scale=0.5] (8) at (2.2, 1) {}; 
	\node[circle, fill, scale=0.5] (9) at (2.2, 0) {}; 
	\node[circle, fill, scale=0.5] (10) at (2.8, 1) {}; 
	\node[circle, fill, scale=0.5] (11) at (2.8, 0) {}; 
	\node[circle, fill, scale=0.5] (12) at (3.4, 2) {}; 
	\node[circle, fill, scale=0.5] (13) at (3.4, 1) {}; 

    \node at (2.53,0.5) {$\cdots$};

    \draw[white] (1) circle[radius=0.2];
    \draw[white] (3) circle[radius=0.2];

	\draw[black, line width=\w] (0.center) to (1.center);
	\draw[black, line width=\w] (0.center) to (2.center);
	\draw[black, line width=\w] (0.center) to (7.center);
	\draw[black, dashed] (1.center) to (2.center);
	\draw[black] (1) to (12);
	\draw[black] (2) to (6);
	\draw[black] (3) to (4);
	\draw[black, line width=\w, dashed] (3.center) to (5.center);
	\draw[black] (3) to (11);
	\draw[black] (4) to (5);
	\draw[black, line width=\w] (4.center) to (12.center);
	\draw[black] (5) to (13);
	\draw[black] (6) to (7);
	\draw[black, line width=\w] (6.center) to (8.center);
	\draw[black] (7) to (9);
	\draw[black] (8) to (9);
	\draw[black] (8) to (10);
	\draw[black, line width=\w] (9.center) to (11.center);
	\draw[black] (10) to (11);
	\draw[black, line width=\w] (10.center) to (13.center);
	\draw[black, dashed] (12) to (13);

     \draw[thick] (0) circle[radius=0.2];
     \draw[brace] ($ (11) + (.25,0) $)-- node[below=2ex]{odd} ($ (7) + (-.25,0) $);
 
\end{tikzpicture}
                \Arrow
                \begin{tikzpicture}[scale=1, brace/.style={thick,decorate, decoration={calligraphic brace, amplitude=7pt,raise=0.5ex}}]
	\node[circle, fill, scale=0.5] (0) at (0, 0) {}; 
	\node[circle, fill, scale=0.5] (1) at (0, 2) {}; 
	\node[circle, fill, scale=0.5] (2) at (1, 1) {}; 
	\node[circle, fill, scale=0.5] (3) at (5, 0) {}; 
	\node[circle, fill, scale=0.5] (4) at (5, 2) {}; 
	\node[circle, fill, scale=0.5] (5) at (4, 1) {}; 
	\node[circle, fill, scale=0.5] (6) at (1.5, 2) {}; 
	\node[circle, fill, scale=0.5] (7) at (1.5, 1) {}; 
	\node[circle, fill, scale=0.5] (8) at (2.0, 1) {}; 
	\node[circle, fill, scale=0.5] (9) at (2.0, 0) {}; 
	\node[circle, fill, scale=0.5] (10) at (2.5, 1) {}; 
	\node[circle, fill, scale=0.5] (11) at (2.5, 0) {}; 
	\node[circle, fill, scale=0.5] (12) at (3.0, 1) {}; 
	\node[circle, fill, scale=0.5] (13) at (3.0, 0) {}; 
	\node[circle, fill, scale=0.5] (14) at (3.5, 2) {}; 
	\node[circle, fill, scale=0.5] (15) at (3.5, 1) {}; 

    \node at (2.78,0.5) {$\cdots$};

    \draw[white] (1) circle[radius=0.2];
    \draw[white] (3) circle[radius=0.2];

	\draw[black] (0) to (1);
	\draw[black] (0) to (2);
	\draw[black] (0) to (9);
	\draw[black] (1) to (2);
	\draw[black] (1) to (6);
	\draw[black] (2) to (7);
	\draw[black] (3) to (4);
	\draw[black] (3) to (5);
	\draw[black] (3) to (13);
	\draw[black] (4) to (5);
	\draw[black] (4) to (14);
	\draw[black] (5) to (15);
	\draw[black, dashed] (6) to (7);
	\draw[black] (6) to (14);
	\draw[black] (7) to (8);
	\draw[black] (8) to (9);
	\draw[black] (8) to (10);
	\draw[black] (9) to (11);
	\draw[black] (10) to (11);
	\draw[black] (10) to (12);
	\draw[black] (11) to (13);
	\draw[black] (12) to (13);
	\draw[black] (12) to (15);
	\draw[black, dashed] (14) to (15);

    \draw[brace] ($ (13) + (.25,0) $)-- node[below=2ex]{odd} ($ (9) + (-.25,0) $);

\end{tikzpicture}
            }
            \label{pat:31..13_odd}
    
            \item \parbox{\linewidth}{
                \begin{tikzpicture}[scale=1, brace/.style={thick,decorate, decoration={calligraphic brace, amplitude=7pt,raise=0.5ex}}]
	\node[circle, fill, scale=0.5] (0) at (0, 0) {}; 
	\node[circle, fill, scale=0.5] (1) at (0, 2) {}; 
	\node[circle, fill, scale=0.5] (2) at (1, 1) {}; 
	\node[circle, fill, scale=0.5] (3) at (5, 0) {}; 
	\node[circle, fill, scale=0.5] (4) at (5, 2) {}; 
	\node[circle, fill, scale=0.5] (5) at (4, 1) {}; 
	\node[circle, fill, scale=0.5] (6) at (1.75, 1) {}; 
	\node[circle, fill, scale=0.5] (7) at (1.75, 0) {}; 
	\node[circle, fill, scale=0.5] (8) at (2.5, 1) {}; 
	\node[circle, fill, scale=0.5] (9) at (2.5, 0) {}; 
	\node[circle, fill, scale=0.5] (10) at (3.25, 2) {}; 
	\node[circle, fill, scale=0.5] (11) at (3.25, 1) {}; 
    \node at (2.15,0.5) {$\cdots$};

    \draw[white] (1) circle[radius=0.2];
    \draw[white] (3) circle[radius=0.2];

	\draw[black, dashed] (0) to (1);
	\draw[black, line width=\w] (0.center) to (2.center);
	\draw[black] (0) to (7);
	\draw[black, line width=\w] (1.center) to (2.center);
	\draw[black] (1) to (10);
	\draw[black, line width=\w] (2.center) to (6.center);
	\draw[black] (3) to (4);
	\draw[black, line width=\w, dashed] (3.center) to (5.center);
	\draw[black] (3) to (9);
	\draw[black] (4) to (5);
	\draw[black, line width=\w] (4.center) to (10.center);
	\draw[black] (5) to (11);
	\draw[black] (6) to (7);
	\draw[black] (6) to (8);
	\draw[black, line width=\w] (7.center) to (9.center);
	\draw[black] (8) to (9);
	\draw[black, line width=\w] (8.center) to (11.center);
	\draw[black, dashed] (10) to (11);

     \draw[thick] (2) circle[radius=0.2];
     \draw[brace] ($ (9) + (.25,0) $)-- node[below=2ex]{even} ($ (7) + (-.25,0) $);
     

\end{tikzpicture}
                \Arrow
                \begin{tikzpicture}[scale=1, brace/.style={thick,decorate, decoration={calligraphic brace, amplitude=7pt,raise=0.5ex}}]
	\node[circle, fill, scale=0.5] (0) at (0, 0) {}; 
	\node[circle, fill, scale=0.5] (1) at (0, 2) {}; 
	\node[circle, fill, scale=0.5] (2) at (1, 1) {}; 
	\node[circle, fill, scale=0.5] (3) at (5, 0) {}; 
	\node[circle, fill, scale=0.5] (4) at (5, 2) {}; 
	\node[circle, fill, scale=0.5] (5) at (4, 1) {}; 
	\node[circle, fill, scale=0.5] (6) at (1.6, 2) {}; 
	\node[circle, fill, scale=0.5] (7) at (1.6, 0) {}; 
	\node[circle, fill, scale=0.5] (8) at (2.2, 1) {}; 
	\node[circle, fill, scale=0.5] (9) at (2.2, 0) {}; 
	\node[circle, fill, scale=0.5] (10) at (2.8, 1) {}; 
	\node[circle, fill, scale=0.5] (11) at (2.8, 0) {}; 
	\node[circle, fill, scale=0.5] (12) at (3.4, 2) {}; 
	\node[circle, fill, scale=0.5] (13) at (3.4, 1) {}; 

    \node at (2.53,0.5) {$\cdots$};

    \draw[white] (1) circle[radius=0.2];
    \draw[white] (3) circle[radius=0.2];

	\draw[black, dashed] (0) to (1);
	\draw[black] (0) to (2);
	\draw[black] (0) to (7);
	\draw[black] (1) to (2);
	\draw[black] (1) to (6);
	\draw[black] (2) to (8);
	\draw[black] (3) to (4);
	\draw[black] (3) to (5);
	\draw[black] (3) to (11);
	\draw[black] (4) to (5);
	\draw[black] (4) to (12);
	\draw[black] (5) to (13);
	\draw[black] (6) to (7);
	\draw[black] (6) to (12);
	\draw[black] (7) to (9);
	\draw[black] (8) to (9);
	\draw[black] (8) to (10);
	\draw[black] (9) to (11);
	\draw[black] (10) to (11);
	\draw[black] (10) to (13);
	\draw[black, dashed] (12) to (13);

    \draw[brace] ($ (11) + (.25,0) $)-- node[below=2ex]{even} ($ (9) + (-.25,0) $);

\end{tikzpicture}
            }
            \label{pat:21..13_even}
    
            \item \parbox{\linewidth}{
                \begin{tikzpicture}[scale=1, brace/.style={thick,decorate, decoration={calligraphic brace, amplitude=7pt,raise=0.5ex}}]
	\node[circle, fill, scale=0.5] (0) at (0, 0) {}; 
	\node[circle, fill, scale=0.5] (1) at (0, 2) {}; 
	\node[circle, fill, scale=0.5] (2) at (1, 1) {}; 
	\node[circle, fill, scale=0.5] (3) at (5, 0) {}; 
	\node[circle, fill, scale=0.5] (4) at (5, 2) {}; 
	\node[circle, fill, scale=0.5] (5) at (4, 1) {}; 
	\node[circle, fill, scale=0.5] (6) at (1.6, 1) {}; 
	\node[circle, fill, scale=0.5] (7) at (1.6, 0) {}; 
	\node[circle, fill, scale=0.5] (8) at (2.2, 1) {}; 
	\node[circle, fill, scale=0.5] (9) at (2.2, 0) {}; 
	\node[circle, fill, scale=0.5] (10) at (2.8, 1) {}; 
	\node[circle, fill, scale=0.5] (11) at (2.8, 0) {}; 
	\node[circle, fill, scale=0.5] (12) at (3.4, 2) {}; 
	\node[circle, fill, scale=0.5] (13) at (3.4, 1) {}; 

    \node at (2.53,0.5) {$\cdots$};

    \draw[white] (1) circle[radius=0.2];
    \draw[white] (3) circle[radius=0.2];

	\draw[black] (0) to (1);
	\draw[black, line width=\w] (0.center) to (2.center);
	\draw[black] (0) to (7);
	\draw[black, line width=\w, dashed] (1.center) to (2.center);
	\draw[black] (1) to (12);
	\draw[black, line width=\w] (2.center) to (6.center);
	\draw[black] (3) to (4);
	\draw[black, dashed] (3) to (5);
	\draw[black, line width=\w] (3.center) to (11.center);
	\draw[black, line width=\w] (4.center) to (5.center);
	\draw[black] (4) to (12);
	\draw[black] (5) to (13);
	\draw[black] (6) to (7);
	\draw[black] (6) to (8);
	\draw[black, line width=\w] (7.center) to (9.center);
	\draw[black] (8) to (9);
	\draw[black, line width=\w] (8.center) to (10.center);
	\draw[black] (9) to (11);
	\draw[black] (10) to (11);
	\draw[black] (10) to (13);
	\draw[black, line width=\w, dashed] (12.center) to (13.center);

     \draw[thick] (2) circle[radius=0.2];
     \draw[brace] ($ (11) + (.25,0) $)-- node[below=2ex]{odd} ($ (7) + (-.25,0) $);

 
\end{tikzpicture}
                \Arrow
                \begin{tikzpicture}[scale=1, brace/.style={thick,decorate, decoration={calligraphic brace, amplitude=7pt,raise=0.5ex}}]
	\node[circle, fill, scale=0.5] (0) at (0, 0) {}; 
	\node[circle, fill, scale=0.5] (1) at (0, 2) {}; 
	\node[circle, fill, scale=0.5] (2) at (1, 1) {}; 
	\node[circle, fill, scale=0.5] (3) at (5, 0) {}; 
	\node[circle, fill, scale=0.5] (4) at (5, 2) {}; 
	\node[circle, fill, scale=0.5] (5) at (4, 1) {}; 
	\node[circle, fill, scale=0.5] (6) at (1.5, 2) {}; 
	\node[circle, fill, scale=0.5] (7) at (1.5, 0) {}; 
	\node[circle, fill, scale=0.5] (8) at (2.0, 1) {}; 
	\node[circle, fill, scale=0.5] (9) at (2.0, 0) {}; 
	\node[circle, fill, scale=0.5] (10) at (2.5, 1) {}; 
	\node[circle, fill, scale=0.5] (11) at (2.5, 0) {}; 
	\node[circle, fill, scale=0.5] (12) at (3.0, 1) {}; 
	\node[circle, fill, scale=0.5] (13) at (3.0, 0) {}; 
	\node[circle, fill, scale=0.5] (14) at (3.5, 2) {}; 
	\node[circle, fill, scale=0.5] (15) at (3.5, 1) {}; 

    \node at (2.78,0.5) {$\cdots$};

    \draw[white] (1) circle[radius=0.2];
    \draw[white] (3) circle[radius=0.2];

	\draw[black] (0) to (1);
	\draw[black, dashed] (0) to (2);
	\draw[black] (0) to (7);
	\draw[black] (1) to (2);
	\draw[black] (1) to (6);
	\draw[black] (2) to (8);
	\draw[black] (3) to (4);
	\draw[black, dashed] (3) to (5);
	\draw[black] (3) to (13);
	\draw[black] (4) to (5);
	\draw[black] (4) to (14);
	\draw[black] (5) to (15);
	\draw[black] (6) to (7);
	\draw[black] (6) to (14);
	\draw[black] (7) to (9);
	\draw[black] (8) to (9);
	\draw[black] (8) to (10);
	\draw[black] (9) to (11);
	\draw[black] (10) to (11);
	\draw[black] (10) to (12);
	\draw[black] (11) to (13);
	\draw[black] (12) to (13);
	\draw[black] (12) to (15);
	\draw[black] (14) to (15);

    \draw[brace] ($ (13) + (.25,0) $)-- node[below=2ex]{odd} ($ (9) + (-.25,0) $);

\end{tikzpicture}
            }
            \label{pat:21..13_odd}
        \end{enumerate}
        \caption{On the right-hand side of the picture we show extended prisms with at most two lonely edges (dashed). They can be obtained by replacing the encircled vertex $v$ of the graph on the left-hand side with a triangle, where the thick edges represent joins rooted in $v$.}
        \label{fig:bad_patterns_2}
    \end{figure}

    In Figure~\ref{fig:bad_patterns} and~\ref{fig:bad_patterns_2}, we depict (on the right-hand side) a list of extended prisms having at most two lonely edges. Indeed, from the previous characterization of $\mathcal{U}_4$ and Lemma \ref{lem:blowUpToTriangle} it is easy to see that the graphs on the left-hand side of such figures have exactly $3$ lonely edges. Applying again Lemma \ref{lem:blowUpToTriangle} we get that the graphs on the right-hand side have exactly two lonely edges.
    By Observation~\ref{obs:subsequence}, if one of their extension patterns appears as a subsequence of a pattern of an extended prism $P'$, we can deduce that $P'$ is in $\mathcal{U}_{\le 2}$, where $\mathcal{U}_{\le k} := \bigcup_{i=0}^k\mathcal{U}_i$.

    So let $Y=(s_1,s_2,s_3,.....,s_k)$ with $k\ge4$ be an extension pattern of $G$, and let $\operatorname{set}(Y)$ be the set of elements appearing in the sequence $Y$. 
    If $|\operatorname{set}(Y)|=1$, by Theorem~\ref{thm:Characterization_U4}, we know that $G\in\mathcal{U}_4$. If $|\operatorname{set}(Y)|=2$, we have the following cases:
    \begin{itemize}
        \item there is a $j$ such that $s_j = s_{j+1} \ne s_{j+2} = s_{j+3}$. This case leads to a graph in $\mathcal{U}_{\le 2}$ 
        because of pattern~\ref{pat:1133} in Figure~\ref{fig:bad_patterns};
        \item there are $i<j$ with $j-i \ge 2$ such that, $s_i = s_j$ and such that $s_i \ne s_{i+1}=s_{i+2}=...=s_{j-1} \ne s_j$. This case leads to a graph in $\mathcal{U}_{\le 2}$ because of patterns \ref{pat:31..13_even} and \ref{pat:31..13_odd} in Figure~\ref{fig:bad_patterns_2}.
    \end{itemize}
    So we conclude that, in this case, the only pattern left (up to isomorphism of graphs) is $s_1 = s_2 = \ldots = s_{k-1} \neq s_k$. 
    This is indeed the only possible pattern (up to graph isomorphism) allowed by the statement when $|\operatorname{set}(Y)|=2$.

    If $|$set$(Y)|=3$ we have the following cases:
    \begin{itemize}
        \item all patterns containing a pattern described in the previous two points lead to graphs in $\mathcal{U}_{\le 2}$;
        \item there are $i<j$ such that $j-i \ge 3$ such that $s_i \ne s_j$ and $s_i \ne s_{i+1}=s_{i+2}=...=s_{j-1} \ne s_j.$ This case leads to a graph in $\mathcal{U}_{\le 2}$ because of patterns~\ref{pat:21..13_even} and~\ref{pat:21..13_odd} in Figure~\ref{fig:bad_patterns_2};
        \item there is an integer $j$ such that $s_j = s_{j+3}$ and $s_j\ne s_{j+1} \ne s_{j+2} \ne s_{j+3}$. This case leads to a graph in $\mathcal{U}_{\le 2}$ because of pattern~\ref{pat:2132} in Figure~\ref{fig:bad_patterns}.
    \end{itemize}
    So we conclude that also in this case the only patterns left are the ones described in our statement.

Hence, up to now we have proved that if $G \in \mathcal{U}_3$ then either it is in $\mathcal{W}_3$ or it is an extended prism with extension pattern $(1,\ldots,1,3,2,\ldots,2)$.\\

Now we conclude the proof by showing the converse, that is: all patterns in the statement give rise to extended prisms with exactly three lonely edges. Every extended prism with a pattern as stated in the theorem is a descendant of  $Pr_3$ or $Pr_4$, and then by Observation \ref{obs:subsequence} it has at most three lonely edges. 
Hence, we only need to prove that it has at least three lonely edges to conclude that it belongs to $\mathcal{U}_3.$

We prove the following: if $G$ is an extended prism admitting pattern $( \underbrace{1,\dots,1}_{h-1 \text{ times}}, 3, \underbrace{2,\dots,2}_{k-h \text{ times}}), $ where $h-1 \geq k-h$, $ 2 \leq h \leq k$ and $k\ge 3$, then 
    
    \begin{itemize}
        \item[$i)$] the only edge $e'$ (see Figure \ref{fig:u3join}) corresponding to the $T_v$-extension of index $3$ is lonely;
        \item[$ii)$] if $h$ is even then 
        $u_1u_2$ is lonely, otherwise 
        $u_1u_3$ is lonely;
        \item[$iii)$] if $k-h$ is odd then 
        $v_1v_2$ is lonely, otherwise 
        $v_2v_3$ is lonely.
    \end{itemize}
    
    For $k\in\{h,h+1\},$ we prove this statement by induction on $h$. The base step is given by the graphs $Pr_3$ and $Pr_4$ defined above in this proof. More precisely, for $k=h$ the base step is $Pr_3$, whereas for $k=h+1$, the base step is given by $Pr_4$. 
    So, let $h\ge3$ and $G$ be an extended prism with pattern described above.  
    Note that $G$ is obtained by replacing the vertex 
    $u_1$ of the graph $G'$ by a triangle, where $G'$ is an extended prism with pattern $(\underbrace{1,\dots,1}_{h-2 \text{ times}}, 3, \underbrace{2,\dots,2}_{k-h \text{ times}})$. Points $i), ii)$ and $iii)$ hold for $G'$ by inductive hypothesis.
    When constructing $G$ from $G'$, it is easy to see that the lonely edge in $T_u$ changes according to the parity of $h$ as stated above. 
    Now, we just show that there is no 
    join rooted in $u_1$ containing lonely edges in $G'$.
    \begin{figure}[!htb]
        \centering
        \begin{tikzpicture}[scale=1]
	\node[circle, fill, scale=0.5, label=below:$u_1$] (0) at (0, 0) {}; 
	\node[circle, fill, scale=0.5, label=$u_2$] (1) at (0, 2) {}; 
	\node[circle, fill, scale=0.5, label=$u_3$] (2) at (1, 1) {}; 
	\node[circle, fill, scale=0.5, label=below:$v_1$] (3) at (5, 0) {}; 
	\node[circle, fill, scale=0.5, label=$v_2$] (4) at (5, 2) {}; 
	\node[circle, fill, scale=0.5, label=$v_3$] (5) at (4, 1) {}; 
	\node[circle, fill, scale=0.5] (6) at (1.75, 2) {}; 
	\node[circle, fill, scale=0.5] (7) at (1.75, 1) {}; 
	\node[circle, fill, scale=0.5] (8) at (2.5, 2) {}; 
	\node[circle, fill, scale=0.5] (9) at (2.5, 1) {}; 
	\node[circle, fill, scale=0.5] (10) at (3.25, 2) {}; 
	\node[circle, fill, scale=0.5] (11) at (3.25, 0) {}; 

	\draw[black, line width=\w] (0.center) to (1.center);
	\draw[black, line width=\w, dashed] (0.center) to (2.center);
	\draw[black, line width=\w] (0.center) to (11.center);
	\draw[black] (1) to (2);
	\draw[black] (1) to (6);
	\draw[black] (2) to (7);
	\draw[black] (3) to (4);
	\draw[black] (3) to (5);
	\draw[black] (3) to (11);
	\draw[black, line width=\w, dashed] (4.center) to (5.center);
	\draw[black] (4) to (10);
	\draw[black] (5) to (9);
	\draw[black] (6) to (7);
	\draw[black] (6) to (8);
	\draw[black] (7) to (9);
	\draw[black] (8) to (9);
	\draw[black] (8) to (10);
	\draw[black, dashed] (10) to node [near end, left] {$e'$} (11);
 
\end{tikzpicture}\qquad
        \begin{tikzpicture}[scale=1]
	\node[circle, fill, scale=0.5, label=below:$u_1$] (0) at (0, 0) {}; 
	\node[circle, fill, scale=0.5, label=$u_2$] (1) at (0, 2) {}; 
	\node[circle, fill, scale=0.5, label=$u_3$] (2) at (1, 1) {}; 
	\node[circle, fill, scale=0.5, label=below:$v_1$] (3) at (5, 0) {}; 
	\node[circle, fill, scale=0.5, label=$v_2$] (4) at (5, 2) {}; 
	\node[circle, fill, scale=0.5, label=$v_3$] (5) at (4, 1) {}; 
	\node[circle, fill, scale=0.5] (6) at (1.6, 2) {}; 
	\node[circle, fill, scale=0.5] (7) at (1.6, 1) {}; 
	\node[circle, fill, scale=0.5] (8) at (2.2, 2) {}; 
	\node[circle, fill, scale=0.5] (9) at (2.2, 1) {}; 
	\node[circle, fill, scale=0.5] (10) at (2.8, 2) {}; 
	\node[circle, fill, scale=0.5] (11) at (2.8, 0) {}; 
	\node[circle, fill, scale=0.5] (12) at (3.4, 1) {}; 
	\node[circle, fill, scale=0.5] (13) at (3.4, 0) {}; 

	\draw[black, line width=\w] (0.center) to (1.center);
	\draw[black, line width=\w, dashed] (0.center) to (2.center);
	\draw[black, line width=\w] (0.center) to (11.center);
	\draw[black] (1) to (2);
	\draw[black] (1) to (6);
	\draw[black] (2) to (7);
	\draw[black, line width=\w, dashed] (3.center) to (4.center);
	\draw[black] (3) to (5);
	\draw[black] (3) to (13);
	\draw[black] (4) to (5);
	\draw[black] (4) to (10);
	\draw[black, line width=\w] (5.center) to (12.center);
	\draw[black] (6) to (7);
	\draw[black] (6) to (8);
	\draw[black] (7) to (9);
	\draw[black] (8) to (9);
	\draw[black] (8) to (10);
	\draw[black] (9) to (12);
	\draw[black, dashed] (10) to node [near start, right] {$e'$} (11);
	\draw[black] (11) to (13);
	\draw[black] (12) to (13);
\end{tikzpicture}
        \caption{The graph $G'$ with $h = 3$. On the left $k = 3$ and on the right $k = 4$. The thick edges indicate edges which necessarily need to be present in a 
        join rooted in $u_1$ containing a lonely edge not incident to $u_3$. }
        \label{fig:u3join}
    \end{figure}
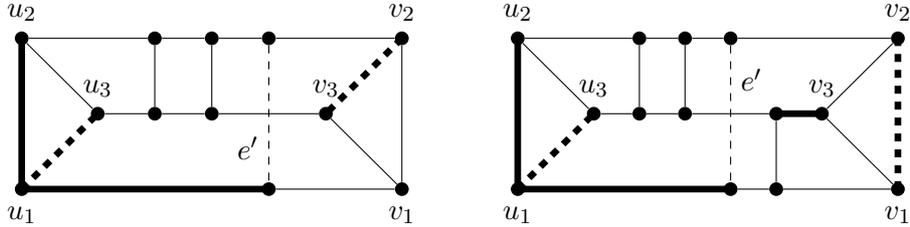
    
    Note that a 
    join rooted in $u_1$ cannot contain $e'$ and it cannot contain the other lonely edge. See Figure~\ref{fig:u3join}, where we depict $G'$ with $h=3$ and $k\in \{3,4\}$. Indeed, a 
    join rooted in $u_1$ contains an edge incident with $e'$. If it does contain the other lonely edge then one vertex is unmatched. Note that this holds for all values of $h$. Thus the statement follows for all $h\ge2$ and $k\in\{h,h+1\}$ with $k\ge3.$

    The same argument now can be employed to prove the statement for all other cases. Namely, we fix $h\ge2$ and we argue by induction on $k$. 
\end{proof}

\subsection{Some insights on \texorpdfstring{$\mathcal{U}_1$}{U1} and \texorpdfstring{$\mathcal{U}_2$}{U2}}

We conclude the paper with some insights on ${\cal U}_1$ and ${\cal U}_2$, which turn out to be much richer families of graphs than ${\cal U}_k$ for $k>2.$
Indeed, we prove something stronger, i.e.\ that for all $k\ge2$ there is a graph in $\mathcal{U}_1$ and a graph in $\mathcal{U}_2$ with exactly $k$ triangles.
Note that, except for $K_4$, every graph in 
$\mathcal{U} \setminus \mathcal{U}_{\le 2}$ 
contains at most three triangles. In particular, finitely many such graphs contain exactly three triangles and all others contain exactly two triangles. 


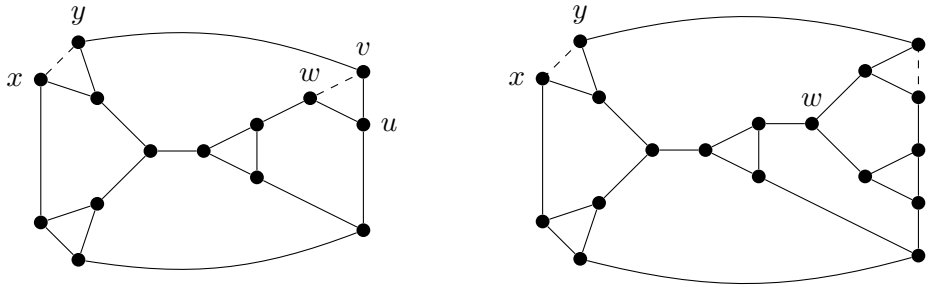
\begin{figure}[!htb]
    \centering
        







    \begin{tikzpicture}[scale=0.7]
        \node[circle, fill, scale=0.5] (1) at (-0.35355339059, -2.06066017178) {};
        \node[circle, fill, scale=0.5] (2) at (-1.06066017178, -1.35355339059) {};
        \node[circle, fill, scale=0.5] (3) at (0, -1) {};
        
        \node[circle, fill, scale=0.5, label=left:$x$] (4) at (-1.06066017178, 1.35355339059) {};
        \node[circle, fill, scale=0.5, label=$y$] (5) at (-0.35355339059, 2.06066017178) {};
        \node[circle, fill, scale=0.5] (6) at (0, 1) {};
        \node[circle, fill, scale=0.5] (7) at (1, 0) {};

        \draw[] (1) to (2);
        \draw (1) to (3);
        \draw (2) to (3);

        \draw (2) to[] (4);

        \draw[dashed] (4) to (5);
        \draw (4) to (6);
        \draw (5) to (6);

        \draw (3) to (7);
        \draw (6) to (7);


        \node[circle, fill, scale=0.5] (8) at (2, 0) {};
        \node[circle, fill, scale=0.5] (9) at (3, -0.5) {};
        \node[circle, fill, scale=0.5] (10) at (3, 0.5) {};

        \draw (8) to (9);
        \draw (8) to (10);
        \draw (9) to (10);

        \draw (7) to (8);

        \node[circle, fill, scale=0.5, label=$w$] (11) at (4, 1) {};
        \node[circle, fill, scale=0.5, label=$v$] (12) at (5, 1.5) {};
        \node[circle, fill, scale=0.5, label= right:$u$] (13) at (5, 0.5) {};

        \draw[dashed] (11) to (12);
        \draw (11) to (13);
        \draw[] (12) to (13);

        \draw (10) to (11);

        \node[circle, fill, scale=0.5] (14) at (5, -1.5) {};
        \draw (9) to (14);
        \draw (13) to (14);

        \draw (1) to[bend right=15]  (14);
        \draw (5) to[bend left=15] (12);

        \node[circle, fill=white] at (6, 2.5) {};
        \node[fill=white] at (6, -2.5) {};
    \end{tikzpicture}
    \qquad 
    \begin{tikzpicture}[scale=0.7]
        \node[circle, fill, scale=0.5] (1) at (-0.35355339059, -2.06066017178) {};
        \node[circle, fill, scale=0.5] (2) at (-1.06066017178, -1.35355339059) {};
        \node[circle, fill, scale=0.5] (3) at (0, -1) {};
        
        \node[circle, fill, scale=0.5, label=left:$x$] (4) at (-1.06066017178, 1.35355339059) {};
        \node[circle, fill, scale=0.5, label=above:$y$] (5) at (-0.35355339059, 2.06066017178) {};
        \node[circle, fill, scale=0.5] (6) at (0, 1) {};
        \node[circle, fill, scale=0.5] (7) at (1, 0) {};

        \draw[] (1) to (2);
        \draw (1) to (3);
        \draw (2) to (3);

        \draw (2) to[] (4);

        \draw[dashed] (4.center) to (5.center);
        \draw (4) to (6);
        \draw (5) to (6);

        \draw (3) to (7);
        \draw (6) to (7);

        \node[circle, fill, scale=0.5] (8) at (2, 0) {};
        \node[circle, fill, scale=0.5] (9) at (3, -0.5) {};
        \node[circle, fill, scale=0.5] (10) at (3, 0.5) {};

        \draw (8) to (9);
        \draw (8) to node[above left] {} (10);
        \draw (9) to (10);

        \draw (7) to (8);

        \node[circle, fill, scale=0.5] (11) at (6, -2) {};
        \draw (9) to (11);

        \node[circle, fill, scale=0.5, label=above:$w$] (12) at (4, 0.5) {};
        \draw (10) to (12);

        \node[circle, fill, scale=0.5] (13) at (5, 1.5) {};
        \node[circle, fill, scale=0.5] (14) at (6, 2) {};
        \node[circle, fill, scale=0.5, label= {[label distance=-0.15cm]above left:}] (15) at (6, 1) {};
        \draw (13) to (14);
        \draw (13) to (15);
        \draw[dashed] (14.center) to (15.center);

        \node[circle, fill, scale=0.5] (16) at (5, -0.5) {};
        \node[circle, fill, scale=0.5] (17) at (6, -1) {};
        \node[circle, fill, scale=0.5] (18) at (6, 0) {};

        \draw (16) to (17);
        \draw[] (16.center) to (18.center);
        \draw[] (17) to (18);

        \draw (11) to (17);
        \draw (12.center) to (13.center);
        \draw (12) to (16);
        \draw (15) to (18);

        \draw (1) to[bend right=15]  (11);
        \draw (5) to[bend left=15] (14);
    \end{tikzpicture}
    \caption{
    {Left-hand side:} 
    The graph $G_4$ with (dashed) lonely edges $xy$ and $vw$.
    {Right-hand side:} 
    The graph $G_5 := (G_4^u)^v$. Its lonely edges are dashed.}
    \label{fig:gadget_and_G0}
\end{figure}

\begin{theorem}\label{thm:U1_U2}
    For all $k \ge 2$, there is a graph in $\mathcal{U}_2$ and a graph in $\mathcal{U}_1$ with exactly $k$ triangles.
\end{theorem}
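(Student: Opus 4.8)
The plan is to prove the stronger statement by induction on $k$, using the triangle-expansion $G\mapsto G^v$ as the sole construction and controlling separately its effect on the number of triangles and on the number of lonely edges. For the triangles, recall that in a Klee-graph different from $K_4$ any two triangles are vertex-disjoint, so every vertex lies in at most one triangle. When $v$ is replaced by a triangle, the new triangle $v_1v_2v_3$ is always created; a triangle $vx_ix_j$ through $v$ (if any) becomes the $4$-cycle $x_iv_iv_jx_j$ and is destroyed, whereas every triangle avoiding $v$ survives and no further triangle appears (two neighbours of $v$ are adjacent only inside a triangle through $v$). Hence $G^v$ has exactly one more triangle than $G$ when $v$ lies in no triangle, and the same number of triangles when $v$ lies in a triangle.

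For the lonely edges, Corollary~\ref{cor:formula special edges} gives $l(G^v)=l(G)-p_v$, so expanding at a vertex $v$ with $p_v=0$ --- that is, such that no lonely edge of $G$ lies in a $v$-join --- preserves the number of lonely edges and keeps the graph in the same class $\mathcal{U}_1$ or $\mathcal{U}_2$. If moreover $v$ is incident to no lonely edge, then Lemma~\ref{lem:blowUpToTriangle} shows that the lonely edges of $G^v$ are exactly those of $G$: the old lonely edges persist, none is lost, and no new triangle edge becomes lonely since each is opposite to an edge at $v$ and none of these is lonely. Therefore, expanding at a vertex that lies in no triangle, is incident to no lonely edge, and satisfies $p_v=0$ produces a graph in the same class with exactly one additional triangle and the same set of lonely edges.

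It remains to supply base graphs and to guarantee that such a vertex is always available. For $\mathcal{U}_2$ I would start from explicit bases of small triangle count --- an extended prism with exactly two lonely edges for $k=2$, and the gadgets of Figure~\ref{fig:gadget_and_G0} (for instance $G_4$ and $G_5$) --- and enlarge them by the expansion step above; for $\mathcal{U}_1$ I would use the analogous one-lonely-edge gadget. The step is arranged so that the newly created triangle occupies the same local configuration as in the base gadget, which ensures a fresh admissible vertex after each expansion and lets the induction reach every $k\ge2$. When no single admissible vertex is directly present, one first expands an auxiliary vertex lying in a triangle --- an operation that alters neither the triangle count nor the lonely edges --- in order to create one; this is exactly the role of the two-step passage $G_5=(G_4^u)^v$.

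The main obstacle is the verification of the condition $p_v=0$ at each step, i.e.\ that none of the one or two lonely edges of the current graph can be extended to a $v$-join. This is a structural analysis of the perfect matchings of these necklace-like graphs: one argues that any $v$-join through a prescribed lonely edge would leave some vertex unmatched, so that no such $v$-join exists. Establishing this for the base configuration and showing that it is inherited through each expansion closes the induction and yields the statement for both $\mathcal{U}_1$ and $\mathcal{U}_2$.
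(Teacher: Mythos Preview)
Your framework is the same skeleton the paper uses --- track triangles and lonely edges under $G\mapsto G^v$ and look for expansions with $p_v=0$ --- but the proposal stalls at the two places where the actual content lies.

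First, your description of the two-step move $(G_4^u)^v$ does not match what happens. You say the auxiliary expansion at $u$ ``creates'' an admissible vertex, meaning one that lies in no triangle, is incident to no lonely edge, and has $p_v=0$. But after expanding $u$ the lonely edges of $G_4^u$ are still $xy$ and $vw$, so $v$ remains incident to a lonely edge. In the paper's construction one \emph{does} expand at this $v$, and by Lemma~\ref{lem:blowUpToTriangle}(ii) the lonely edge $vw$ is replaced by the new triangle edge opposite to it; the set of lonely edges is not preserved through the step --- one of them shifts into the freshly created triangle. Your recipe of keeping the lonely edges fixed is therefore not what the two-step does, and you have not exhibited any other vertex that meets your three conditions.

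Second, you acknowledge that ``the main obstacle is the verification of the condition $p_v=0$'' and then leave it at ``one argues that any $v$-join through a prescribed lonely edge would leave some vertex unmatched.'' That sentence is the entire proof, and it is missing. The paper supplies it via a concrete inductive invariant on the $\mathcal{U}_2$ family $G_k$ ($k\ge 4$): the two lonely edges are at distance $2$, both lie in triangles, and \emph{no perfect matching of $G_k$ contains them both}. This last property is what forces $p_u=0$: a $u$-join through $xy$ would, after swapping $uv,uw$ for $vw$, yield a perfect matching containing both lonely edges; and no $u$-join can contain $vw$ since $uv,uw$ are already in it. One then checks that $G_{k+1}=(G_k^u)^v$ inherits all three properties, so the induction continues. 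Without isolating and propagating this ``no common perfect matching'' invariant, you cannot certify $p_v=0$ at each stage and the argument does not close. For $\mathcal{U}_1$ the paper runs no parallel induction at all: in each $G_k$ there is an $x$-join through the lonely edge $vw$, so a single further expansion at $x$ drops the graph into $\mathcal{U}_1$ while leaving the triangle count unchanged.
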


\begin{proof}
We first prove the statement for $\mathcal{U}_2$.

For $k\in\{2,3\}$ there is a graph in $\mathcal{U}_2$ with exactly $k$ triangles, see for example Figures~\ref{fig:bad_patterns},~\ref{fig:bad_patterns_2} and~\ref{fig:2lonelyEdges}. We prove that for every $k\geq 4$ there is a graph in $\mathcal{U}_2$ with exactly $k$ triangles that has the following additional properties:
\begin{enumerate}[$i)$]
    \item the two lonely edges are at distance 2,
    \item both lonely edges belong to a triangle, and
    \item there is no perfect matching containing both lonely edges.
\end{enumerate}
We prove the statement above by induction on $k$. For $k=4$ an example is given in Figure~\ref{fig:gadget_and_G0}. Hence, let $G_k$ be a graph in $\mathcal{U}_2$ with exactly $k$ triangles that satisfies $i)$, $ii)$ and $iii)$. We use $G_k$ to construct a graph $G_{k+1}$ with exactly $k+1$ triangles that belongs to $\mathcal{U}_2$ and also satisfies $i)$, $ii)$ and $iii)$. Let $xy$ and $vw$ be the lonely edges of $G_k$, where $y$ and $v$ are adjacent and let $u$ be the third vertex belonging to the triangle that contains $vw$ (see Figure~\ref{fig:gadget_and_G0}).
Set $G_{k+1}=(G^u)^v$. We claim that $G_{k+1}$ has the desired properties. By construction, $G_{k+1}$ has exactly $k+1$ triangles. Furthermore, if there is a join $J$ rooted in $u$ in $G_k$ containing $xy$, then replacing the edges $uv, uw$ by the edge $vw$ in $J$ gives a perfect matching containing both lonely edges of $G_k$, a contradiction to $iii)$. Thus, by Lemma~\ref{lem:blowUpToTriangle}, $G^u$ has exactly two lonely edges, namely $xy$ and $vw$. As a consequence, again by Lemma~\ref{lem:blowUpToTriangle}, $G_{k+1}$ has exactly two lonely edges, namely $xy$ and the new edge opposite to $vw$ that is added when replacing $v$ (see Figure~\ref{fig:gadget_and_G0} for an example).
Thus, $G_{k+1}$ belongs to $\mathcal{U}_2$ and satisfies $i)$ and $ii)$. Finally, suppose $G_{k+1}$ has a perfect matching $M$ containing both lonely edges of $G_{k+1}$. By construction, $M$ contains the thick edges depicted in Figure \ref{fig:perfect matching in G_k+1}. Thus, we can transform $M$ into a perfect matching in $G_k$ containing $xy$ and $vw$, a contradiction since $G_{k}$ satisfies $iii)$.

\begin{figure}[!htb]
    \centering
    \begin{tikzpicture}
        \node[circle, fill, scale=0.5] (1) at (-1.5, -1) {};
        \node[circle, fill, scale=0.5] (2) at (-0.5, -1) {};
        \node[circle, fill, scale=0.5] (3) at (-1, 0) {};
        
        \node[circle, fill, scale=0.5, label=above left:] (4) at (0.5, -1) {};
        \node[circle, fill, scale=0.5, label=above right:] (5) at (1.5, -1) {};
        \node[circle, fill, scale=0.5, label=] (6) at (1, 0) {};

        \node[circle, fill, scale=0.5, label=right:$w$] (7) at (0, 1) {};

        \node[circle, fill, scale=0.5, label= {[label distance=-0.1cm]above left:$y$}] (8) at (2.5, -1) {};
        \node[circle, fill, scale=0.5, label={[label distance=-0.1cm]above right:$x$}] (9) at (3.5, -1) {};
        
        \node[circle, fill, scale=0.5, label=right:] (10) at (3, 0) {};

        \draw (1) to (2);
        \draw (1) to (3);
        \draw[line width=\w] (2.center) to (3.center);

        \draw (2) to (4);

        \draw[line width=\w, dashed] (4.center) to (5.center);
        \draw (4) to (6);
        \draw (5) to (6);

        \draw (3) to (7);
        \draw[line width=\w] (6.center) to (7.center);

        \draw[line width=\w] (1.center) to ($ (1) + (-0.5,0) $);
        \draw (7) to ($ (7) + (0,0.5) $);
        
        \draw (5) to (8);
        \draw[line width=\w, dashed] (8.center) to (9.center);
        \draw (9) to ($ (9) + (0.5,0) $);

        \draw (8) to (10);
        \draw (9) to (10);
        \draw[line width=\w] (10.center) to ($ (10) + (0,0.5) $);
        
    \end{tikzpicture}
    \qquad
    \begin{tikzpicture}
        
        \node[circle, fill, scale=0.5, label={[label distance=-0.1cm]above left:$u$}] (4) at (0.5, -1) {};
        \node[circle, fill, scale=0.5, label={[label distance=-0.1cm]above right:$v$}] (5) at (1.5, -1) {};
        \node[circle, fill, scale=0.5, label=right:$w$] (6) at (1, 0) {};


        \node[circle, fill, scale=0.5, label= {[label distance=-0.1cm]above left:$y$}] (8) at (2.5, -1) {};
        \node[circle, fill, scale=0.5, label={[label distance=-0.1cm]above right:$x$}] (9) at (3.5, -1) {};
        
        \node[circle, fill, scale=0.5, label=right:] (10) at (3, 0) {};



        \draw[] (4.center) to (5.center);
        \draw (4) to (6);
        \draw[line width=\w, dashed] (5.center) to (6.center);


        \draw[line width=\w] (4.center) to ($ (4) + (-0.5,0) $);
        \draw[] (6) to ($ (6) + (0,0.5) $);
        
        \draw (5) to (8);
        \draw[line width=\w, dashed] (8.center) to (9.center);
        \draw (9) to ($ (9) + (0.5,0) $);

        \draw (8) to (10);
        \draw (9) to (10);
        \draw[line width=\w] (10.center) to ($ (10) + (0,0.5) $);
        
    \end{tikzpicture}
    
    \caption{
    {Left-hand side:}
    Part of $G_{k+1}$ with $k\geq 4$ and lonely edges dashed. The thick edges are necessary in a perfect matching containing both lonely edges.
    {Right-hand side:}
    A part of $G_k$ with $k\geq 4$ and lonely edges dashed. A perfect matching $M$ containing both lonely edges in $G_{k+1}$ gives rise to a perfect matching in $G_k$ containing the thick edges.}
        
    \label{fig:perfect matching in G_k+1}
\end{figure}
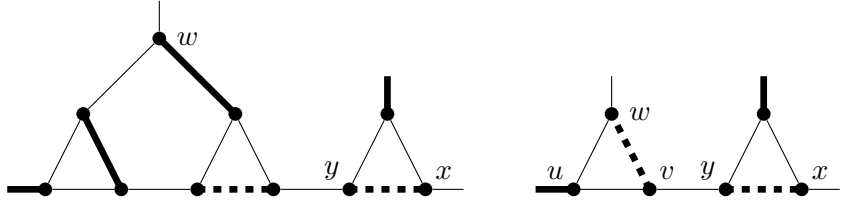

The statement for $\mathcal{U}_1$ can be proved as follows.
For $k\ge4$, it easy to check that in $G_k$ there is a join rooted in $x$ containing the lonely edge $vw$. Therefore, replacing $x$ with a triangle gives rise to a graph in $\mathcal{U}_1$ with exactly $k$ triangles. If $k=3$, we apply the same argument to the graph in Figure \ref{fig:2lonelyEdges}. Finally, we obtain a graph in $\mathcal{U}_1$ with $k=2$ edges by expanding $u_1$ in the second graph on the right-hand side of Figure \ref{fig:bad_patterns}.
\end{proof}

\clearpage
\bibliographystyle{plainurl}
\bibliography{references}
\end{document}